\documentclass[11pt]{amsart}
\usepackage{amsmath,amscd,amssymb,amsfonts,amsthm}
\usepackage{tikz-cd} 
\usepackage{tikz}
\usepackage[color,cmtip,matrix,arrow]{xy}

\newtheorem{theorem}{Theorem}[section]

\newtheorem{proposition}[theorem]{Proposition}

\newtheorem{lemma}[theorem]{Lemma}
\theoremstyle{definition}
\newtheorem{definition}[theorem]{Definition}
\theoremstyle{remark}
\newtheorem{assumption}[theorem]{Assumption}

\newtheorem{example}[theorem]{Example}

\newcommand\I{\mathcal{I}}

\newcommand{\ca}{\mathcal}

\newcommand{\R}{\mathbb{R}}

\newcommand{\on}{\operatorname}

\renewcommand{\subset}{\subseteq}

\renewcommand{\ker}{ \on{ker}}
\newcommand{\im}{ \on{im}}

\newcommand\qu{/\kern-.7ex/} 

\newcommand{\del}{\delta}

\newcommand{\D}{\ca{D}}

\begin{document}
	\sloppy
	\title{Poisson cohomology of ``book'' Poisson structures}
	
	\author{Henrique Bursztyn }
 \address{IMPA}
 \email{henrique@impa.br}

 \author{Hudson Lima}
 \address{UFAM}
 \email{hudsonlima@ufam.edu.br}

\begin{abstract} We compute the Poisson cohomology of the linear Poisson structure  dual to the $n$-dimensional Lie algebra defined by
$$
[e_0,e_i]=e_i, \quad [e_i,e_j]=0, \qquad i,j=1,\ldots,n-1.
$$

\end{abstract}
 
	\maketitle
	\tableofcontents
	
	\section{Introduction}

Poisson cohomology, introduced in  \cite{Lich}, is a fundamental invariant of Poisson manifolds.
For a Poisson manifold $(M,\Lambda)$, it is defined as the cohomology of the complex
$$
d_\Lambda=[\Lambda,\cdot]: \mathfrak{X}^\bullet(M)\to \mathfrak{X}^{\bullet+1}(M),
$$ 
where $[\cdot,\cdot]$ is the Schouten bracket on multivector fields $\mathfrak{X}^\bullet(M)$. 

Poisson cohomology codifies important information about the Poisson structure: For example,  in degree zero, it is the space of Casimir functions (i.e., functions that are constant along symplectic leaves); in degree one, it identifies with the Lie algebra of infinitesimal outer Poisson automorphisms (that is, the space of Poisson vector fields modulo hamiltonian vector fields); in degree two, it describes the space of infinitesimal deformations of the Poisson structure.

Explicit computations of Poisson cohomology are notoriously difficult; known results and calculation methods can be found e.g. in \cite{CFMbook,DZ}. In the particular context of {\em linear} Poisson structures, the case of duals of compact Lie algebras is treated in \cite{GinzWein}, while \cite{Zeiser,MZ} describe the Poisson cohomology of all 3-dimensional examples.

The main goal of this paper is to fully compute, on appropriate open subsets $U\subseteq \R^n$,  the Poisson cohomology of the linear Poisson structure   dual to the Lie algebra defined by the following relations: for $n\geq 2$,
$$
[e_0,e_j]=e_j, \quad [e_i,e_j]=0, \qquad i,j=1,\ldots,n-1.
$$
For $n=3$, this is often referred to as the ``book'' Lie algebra; in this case the corresponding Poisson cohomology in $\R^3$ is described in \cite[Sec.~7]{Zeiser} with a different method. Keeping the terminology for any $n\geq 2$, we refer to the dual linear Poisson structure as the {\em book} Poisson structure
on $\R^{n}$: in coordinates $(t,u_1, ..., u_{n-1})$, it is given by
\begin{equation}\label{eq:book}
    \Lambda_{book} = \partial_t \wedge \big( u_1\partial_{u_1}+ u_2\partial_{u_2} + \cdots + u_{n-1}\partial_{u_{n-1}} \big).
    \end{equation}

The terminology ``book'' is justified by the symplectic leaves of $\Lambda_{book}$: its 0-dimensional leaves are
points along the $t$-axis (the ``binding''); away from the $t$-axis, $\Lambda_{book}$ is regular with 2-dimensional leaves (the ``pages'') given by the fibers of the map
$$
\mathbb{R}^n\setminus \{(t,0,\ldots,0),\; t\in \mathbb{R}\} \to \mathbb{S}^{n-2}, \quad 
(t, u_1,\ldots,u_{n-1}) \mapsto \frac{1}{|u|}(u_1,\ldots,u_{n-1}),
$$
i.e., to each $c=(c_1,...,c_{n-1})\in\mathbb{S}^{n-2}$, there corresponds a leaf given by 
\begin{equation}\label{eq:bookleaf}
\{ (t, \lambda c),\; t\in\R, \ \lambda\in\R^+ \}.
\end{equation}

\begin{figure}[h!]
    \centering
    \includegraphics[width=0.50\linewidth]{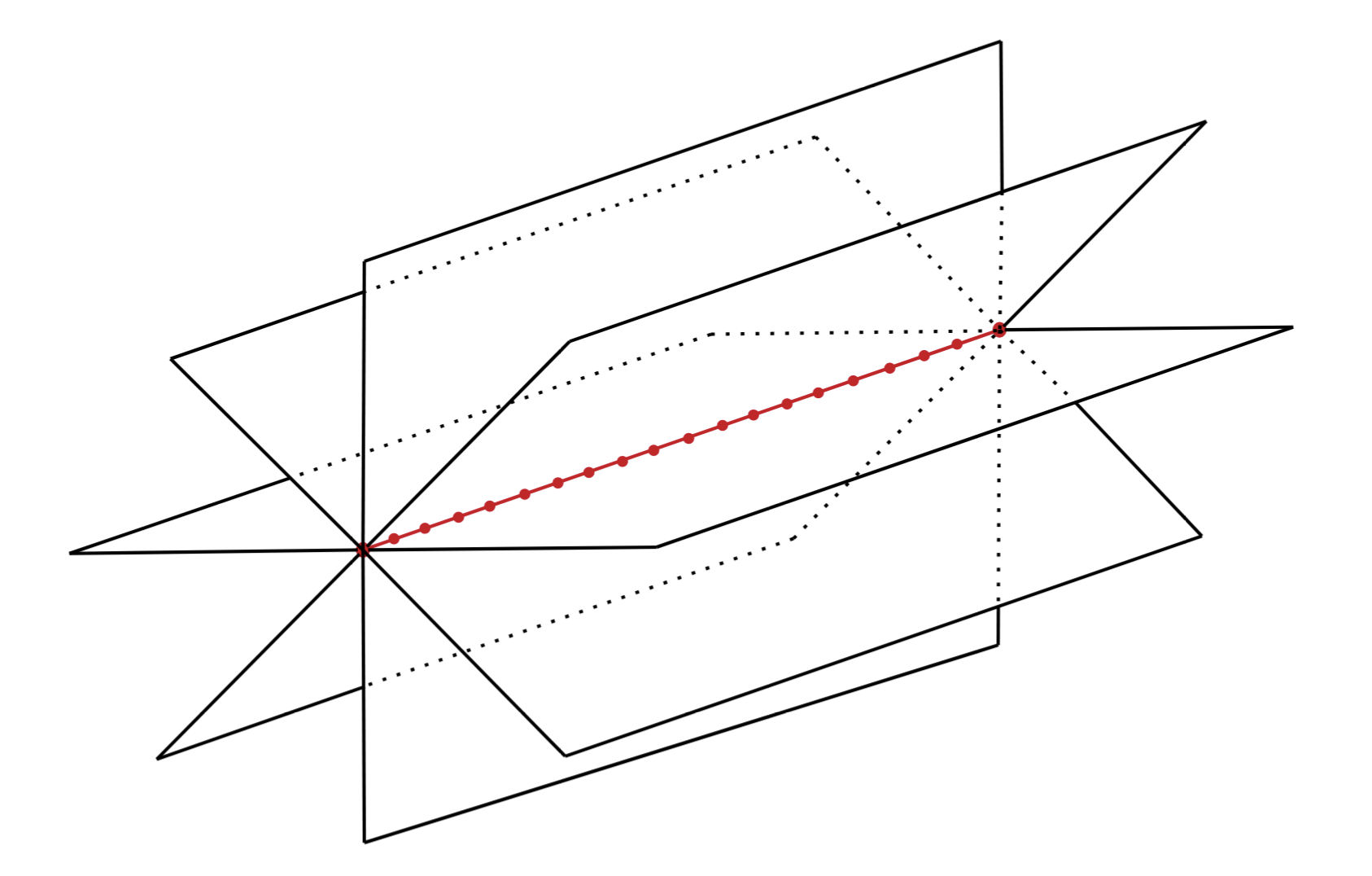}
    \caption{Leaves of $\Lambda_{book}$}
    \label{fig}
\end{figure}

Besides its intrinsic interest, our main motivation for computing the Poisson cohomology of book Poisson structures on suitable open subsets $U\subseteq \R^n$ is that it provides a key ingredient in the calculation of the Poisson cohomology of certain Poisson structures on spheres in \cite{BL}. 
This motivation originates in the theory of Poisson Lie groups: for $n=3$, the book Poisson structure on $\R^3$ arises as the linearization at the identity of the standard (Lu-Weinstein) Poisson structure on $SU(2)$ \cite{LuWe}, and our computations of Poisson cohomology in \cite{BL} include, in particular, the case $SU(2)=S^3$.

The paper is structured as follows:
We state the main results describing the Poisson cohomology of $\Lambda_{book}$  in  $\S$ \ref{sec:mainresults}, and their proofs are presented in $\S$ \ref{sec:proofs}.

\bigskip

\noindent{\bf Acknowledgments.} 
We thank Ioan Marcut and Florian Zeiser for  valuable comments and helpful advice.
We  are also thankful to Rui Fernandes and Marco Zambon for stimulating discussons. This project had the financial support of CNPq (Brazilian National Research Council). H. L.  thanks IMPA for its hospitality during several stages of this work.

\bigskip

\noindent{\bf Notation.}
We consider $\R^{n}=\R \times \R^{n-1}$ with coordinates $(t,u_1, ..., u_{n-1})$.
For an open subset $U\subset\R^n$, let
\begin{equation}\label{eq:Wdt}
\mathcal{W}^k(U) = \{\mu\in\mathfrak{X}^k(U) \ : \ \iota_{dt}\mu =0 \},
\end{equation}
so that any $\mu\in \mathfrak{X}^k(U)$ can be uniquely written in the form
\begin{equation}\label{eq:mudecomp}
\mu= \partial_t\wedge a + b,
\end{equation}
for $a\in \mathcal{W}^{k-1}(U)$ and $b\in \mathcal{W}^k(U)$.

For a vector field $X\in\mathfrak{X}^1(U)$ and $\mu\in\mathcal{W}^k(U)$, we denote by $X(\mu)\in \mathcal{W}^k(U)$ the multivector field obtained by applying $X$ to the coordinate functions of $\mu= \sum_{i_1<\ldots <i_r}\mu_{i_1\ldots i_r}\partial_{u_{i_1}}\wedge\ldots\wedge\partial_{u_{i_r}}$,
$$
X(\mu)=\sum_{i_1<\ldots <i_r} X(\mu_{i_1\ldots i_r})\partial_{u_{i_1}}\wedge\ldots\wedge\partial_{u_{i_r}}.
$$

Given a multi-index $I=(i_1,i_2,\ldots,i_k)$ where $1\leq i_j\leq n-1$ for all $j=1,\ldots,k$, we will write $|I|=k$,
$$
u_I=u_{i_1} u_{i_2}\cdots u_{i_k} \ \ \mbox{ and } \ \ \partial_I=\partial_{u_{i_1}}\wedge \partial_{u_{i_2}}\wedge\cdots\wedge\partial_{u_{i_k}}.
$$
We will assume that $i_1\leq i_2\leq\cdots\leq i_k$ for $u_I$, and the strict inequalities for $\partial_I$. With some abuse of notation, we adopt the following conventions: 
\begin{itemize}
\item Given a multi-index $I$, we shall keep the same notation $I$ for its set of elements.
    \item Multi-indexes of the form $I=(i)$, with length $1$, will be denoted by $i$. 
\item We will
write $j\in I$ to denote that $j\in \{i_1,i_2,\ldots,i_k\}$.
\item If $i$ is an element of $I$ we will use 
    $$
        I\setminus i
    $$
    to denote the multi-index obtained from $I$ by lowering the multiplicity of $i$ by one, so 
    $$
        u_I = u_i u_{(I\setminus i)}.
    $$ 
    
\end{itemize}


\section{Book Poisson cohomology} \label{sec:mainresults}
Consider $\R^n=\R\times \R^{n-1}$ with coordinates $(t,u_1,\ldots,u_{n-1})$.
We write the book Poisson structure on $\mathbb{R}^n$ as
$$ 
\Lambda_{book} = \partial_t\wedge E,
$$
where we use the notation
$$
E= u_1\partial_{u_1}+ u_2\partial_{u_2} + \cdots + u_{n-1}\partial_{u_{n-1}}
$$ 
for the Euler vector field on $\R^{n-1}$, regarded as a vector field on $\R^n=\R\times \R^{n-1}$.

We denote by $d_{book}$ the Poisson differential of $\Lambda_{book}$, and by  $H^\bullet_{book}(U)$ the Poisson cohomology on an open subset $U\subseteq \R^n$.


\begin{lemma}\label{lem:PDE_cocyle}

Let $\mu = \partial_t\wedge a + b \in \mathfrak{X}^k(U)$, as in \eqref{eq:mudecomp}. Then
$$
d_{book} (\mu) = \partial_t\wedge (E\wedge \partial_t a + E(b)-kb) - E\wedge \partial_t b
$$
\end{lemma}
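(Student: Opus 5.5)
The plan is a direct computation with the Schouten bracket. I will use two facts: the graded Leibniz rule for a bivector of the form $X\wedge Y$ with $X,Y$ vector fields, and the fact that the bracket of a vector field with a multivector field is the Lie derivative. With the sign convention implicit in $d_\Lambda=[\Lambda,\cdot]$, the Leibniz rule reads
$$
[X\wedge Y,\mu]= X\wedge[Y,\mu]-Y\wedge[X,\mu],\qquad X,Y\in\mathfrak{X}^1(U).
$$
I will fix this convention at the start. As a sanity check I will test it in degree $0$, where it should give $d_{book}f=\partial_t\wedge E(f)-E\wedge\partial_t f$, i.e.\ (up to the paper's sign) the hamiltonian vector field of $f$. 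Applying it with $X=\partial_t$ and $Y=E$ splits $d_{book}\mu$ into brackets of $\partial_t$ and $E$ with $\partial_t\wedge a$ and with $b$.

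The elementary brackets come next. Since $\partial_t$ commutes with every coordinate field, $[\partial_t,c]$ is just the $t$-derivative of the coefficients of $c$. This gives $[\partial_t,b]=\partial_t b$, and for the other term $[\partial_t,\partial_t\wedge a]=\partial_t\wedge\partial_t a$.

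For $E$, we have $[E,\partial_t]=0$ and $[E,\partial_{u_i}]=-\partial_{u_i}$. Hence for $b=\sum_I b_I\,\partial_I\in\mathcal{W}^k(U)$ the derivation property of $L_E$ gives
$$
[E,b]=L_E b = E(b)-k\,b .
$$
Similarly, $[E,\partial_t\wedge a]=\partial_t\wedge L_E a$.

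Finally I assemble the pieces. For the $b$-part,
$$
[\Lambda_{book},b]=\partial_t\wedge(E(b)-kb)-E\wedge\partial_t b .
$$
For the $a$-part, the first term $\partial_t\wedge\partial_t\wedge L_Ea$ vanishes. The second is $-E\wedge\partial_t\wedge\partial_t a$, which equals $\partial_t\wedge E\wedge\partial_t a$ after swapping the two vector fields $E$ and $\partial_t$. Adding the two parts gives exactly the formula of Lemma~\ref{lem:PDE_cocyle}.

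The only real obstacle is bookkeeping of signs: the Schouten sign convention, and the sign from commuting $E$ past $\partial_t$. I will handle the first by the degree-$0$ check above, and the second by noting that $E$ and $\partial_t$ are both of degree one, so they anticommute in the wedge product.
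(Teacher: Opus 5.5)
Your proposal is correct and is essentially the paper's proof. The paper likewise applies $[X\wedge Y,\mu]=X\wedge\mathcal{L}_Y\mu-Y\wedge\mathcal{L}_X\mu$ with $X=\partial_t$, $Y=E$, uses $\mathcal{L}_E\partial_{u_j}=-\partial_{u_j}$ to get $\mathcal{L}_E b=E(b)-kb$ and $\mathcal{L}_{\partial_t}\mu=\partial_t\wedge\partial_t a+\partial_t b$, drops $\partial_t\wedge\partial_t\wedge\mathcal{L}_E a$, and rewrites $-E\wedge\partial_t\wedge\partial_t a=\partial_t\wedge E\wedge\partial_t a$ exactly as you do (your degree-$0$ check only reproduces the target formula and so does not independently fix the sign convention, but none is needed since the paper adopts the same Leibniz identity).
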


\begin{proof}
    Since $\mathcal{L}_E\partial_j=-\partial_j$, it follows that $\mathcal{L}_E\partial_J = -k\partial_J$ for $k=|J|$, and 
    $$
    \mathcal{L}_E(b_J\partial_J)=E(b_J)\partial_J +b_J\mathcal{L}_E\partial_J = (E(b_J)-kb_J)\partial_J,
    $$
    for every $b_J\in C^\infty(U)$. Therefore, for $b\in\mathcal{W}^k(U)$, we have $\mathcal{L}_E(b) = E(b)-kb$. Also, since $\partial_t$ is a coordinate vector field, for $\mu = \partial_t\wedge a + b$ we have that $\mathcal{L}_{\partial_t}\mu = \partial_t\wedge \partial_ta + \partial_tb$.

Recall that the Schouten bracket satisfies
    \begin{equation*}
        [X\wedge Y, \mu] = X\wedge\mathcal{L}_Y\mu - Y\wedge\mathcal{L}_X\mu,
    \end{equation*}
    for $X,Y\in\mathfrak{X}(U)$.
Hence
    \begin{eqnarray*}
        d^k_{book}(\mu) &=& [\partial_t\wedge E, \ \mu] = \partial_t\wedge\mathcal{L}_E\mu - E\wedge\mathcal{L}_{\partial_t}\mu \\
        &=& \partial_t\wedge\Big(\partial_t\wedge\mathcal{L}_E(a) + \mathcal{L}_E(b)\Big) - E\wedge\Big( \partial_t\wedge\partial_ta + \partial_tb\Big) \\
        &=& \partial_t\wedge\Big( E\wedge\partial_ta + E(b)-kb \Big) - E\wedge\partial_tb.
    \end{eqnarray*}
    
\end{proof}

Consider the subspace 
$$
\mathrm{span}_\R\{\partial_t, u_i\partial_{u_j}\} \subset \mathfrak{X}^1(U),
$$
which has dimension $(n-1)^2+1$ and contains $E$. All elements $\partial_t, u_i\partial_{u_j}$ are $d_{book}$-closed (see Lemma~\ref{lem:PDE_cocyle}), so we have a natural ring homomorphism
\begin{equation}\label{eq:naturalhom}
\wedge^\bullet \mathrm{span}_\R\{\partial_t, u_i\partial_{u_j}\} \longrightarrow
H^\bullet_{book}(U).
\end{equation}

We will consider open subsets $U \subseteq \R^n=\R\times\R^{n-1}$ with the following property:

\begin{assumption}\label{assump:U}
The projection $p\colon U\to\R^{n-1}$, $(t,u)\mapsto u$ has connected fibers and its image $p(U)$ is star-shaped with respect to $0\in\R^{n-1}$. 
\end{assumption}


\begin{theorem}\label{thm:main1}
Let $U\subset \R^n$ be an open subset as in Assumption \ref{assump:U}. Then the map \eqref{eq:naturalhom} is onto and its kernel is the ideal generated by $E$, so it induces a ring isomorphism
$$
\frac{\wedge^\bullet \mathrm{span}_\R\{\partial_t, u_i\partial_{u_j}\}}{\langle E \rangle} \stackrel{\sim}{\longrightarrow} H^\bullet_{book}(U).
$$
\end{theorem}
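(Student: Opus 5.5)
Using Lemma~\ref{lem:PDE_cocyle}, $\mu=\partial_t\wedge a+b$ is a cocycle iff
\begin{equation*}
E\wedge\partial_t b=0 \quad\text{and}\quad E\wedge\partial_t a+\mathcal{L}_E b=0,
\end{equation*}
where $\mathcal{L}_E b=E(b)-kb$. The plan is to normalize $\mu$ in two stages by subtracting coboundaries, so that at the end $b$ is a polynomial of weight zero and $a$ is annihilated by $\mathcal{L}_E$.

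\emph{Stage 1: make $b$ independent of $t$.} Fix the connected $t$-intervals given by the fibers of $p$. The equation $E\wedge\partial_t b=0$ says that $\partial_t b$ is divisible by $E$ away from $u=0$. I would first show, by a division lemma for smooth multivectors vanishing when wedged with $E$ (Koszul-type, using the star-shaped $p(U)$ and Hadamard's lemma), that $\partial_t b=E\wedge c$ for some smooth $c\in\mathcal{W}^{k-1}(U)$. Then I pick a primitive $\gamma$ of $c$ in $t$ on each fiber, choosing a smooth base section so that this is possible; here Assumption~\ref{assump:U} is used. Since $d_{book}(\partial_t\wedge\gamma)$ contributes $-E\wedge\partial_t\gamma$ to the $b$-component, subtracting the coboundary $d_{book}(-\partial_t\wedge\gamma)$ (or the appropriate sign) yields $\partial_t b=0$.

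\emph{Stage 2: reduce to $E$-homogeneous pieces.} Now $b=b(u)$, and the second equation gives $E\wedge\partial_t a=-\mathcal{L}_E b$, with right-hand side independent of $t$. The key tool is the operator $\mathcal{L}_E$ on $\mathcal{W}^\bullet$ over the star-shaped $p(U)$. Via the Taylor expansion and integral formulas of the form $\int_0^1 s^{m-1}f(su)\,ds$, I expect $\mathcal{L}_E$ to be invertible on components of nonzero weight. The weight of $f\,\partial_J$ is $(\deg f)-|J|$, and the kernel consists exactly of the weight-zero polynomial pieces, i.e.\ linear combinations of $u_I\partial_J$ with $|I|=|J|$, which are wedges of the $u_i\partial_{u_j}$. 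I then use coboundaries: $d_{book}(\beta)$ for $\beta\in\mathcal{W}^{k-1}$ independent of $t$ equals $\partial_t\wedge\mathcal{L}_E\beta$. This lets me kill the part of $a$ of nonzero weight once $a$ is made $t$-independent. To make $a$ $t$-independent, I would write $a=a_0(u)+\tilde a$ and handle $E\wedge\partial_t a$ similarly. For the $b$-part with nonzero weight, I would solve $\mathcal{L}_E\beta=b$ and use $d_{book}(\partial_t\wedge\beta)=-\partial_t\wedge E\wedge\partial_t\beta$ together with $d_{book}$ of suitable $t\cdot(\cdot)$ terms (e.g.\ $t$-multiples produce $b$-components) to remove it.

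At the end $b$ is a weight-zero polynomial, so $\mathcal{L}_E b=0$, and hence $E\wedge\partial_t a=0$. Then $a$ is reduced to weight zero as well, which proves surjectivity onto $\wedge^\bullet\mathrm{span}\{\partial_t,u_i\partial_{u_j}\}$.

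\emph{Kernel.} $E\wedge\nu$ is exact for closed $\nu$ in the span: $E=d_{book}(-t)$ up to sign, since $d_{book}f=-\iota_{df}\Lambda$, so $E\wedge\nu=\pm d_{book}(t\nu)$. Conversely, suppose a weight-zero polynomial $\nu$ equals $d_{book}\eta$. Decomposing $\eta$ by weight and by powers of $t$ (its Taylor coefficients), only the weight-zero part contributes. Writing $\eta$ as a $t$-expansion and comparing coefficients, only $\eta=t\,\nu'$ terms survive, and these give $E\wedge$ stuff. I expect this step to need care in the smooth setting, and I would handle it by projecting to weight zero via the $\int_0^1$ averaging.

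\emph{Main obstacle.} The smooth-division and weight-splitting estimates near $u=0$ are the hardest part. In particular, I need to show that $\mathcal{L}_E$ is invertible on smooth sections modulo the weight-zero jets, and that $\partial_t b$ is divisible by $E$. I would attack this via Hadamard's lemma plus a radial-integration homotopy, mirroring the proof of the Poincaré lemma.
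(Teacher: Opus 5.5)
Your outline has the right ingredients: division by $E$ as in Lemma~\ref{lem:timeDependent_flatness}, $t$-primitives from a section of $p$, and radial inversion of $\mathcal{L}_E$ away from weight zero. There is a genuine gap, though. You misidentify which coboundaries can act on the $b$-component, and so the identity that actually drives Stage~2 is missing. By Lemma~\ref{lem:PDE_cocyle},
\[
d_{book}(\partial_t\wedge\alpha+\beta)=\partial_t\wedge\bigl(E\wedge\partial_t\alpha+\mathcal{L}_E\beta\bigr)-E\wedge\partial_t\beta ,
\]
so $d_{book}(\partial_t\wedge\gamma)$ has no $b$-component at all. The $b$-component can only be changed by terms $E\wedge\partial_t\beta$ with $\beta\in\mathcal{W}^{k-1}(U)$.

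In Stage~1 you therefore need $d_{book}(\gamma)$ with $\partial_t^2\gamma=c$, a double $t$-primitive like the paper's $\xi,\widetilde\xi$. In Stage~2, solving $\mathcal{L}_E\beta=b-T^kb$ and using $d_{book}(\partial_t\wedge\beta)$ does nothing to $b$. What you must show is that the nonzero-weight part of $b$ lies in $E\wedge\mathcal{W}^{k-1}$. The missing point is that the radial inverse commutes with $E\wedge-$: $\Theta^k(E\wedge\xi)=E\wedge\Theta^{k-1}\xi$ and $\Theta^k\mathcal{L}_E=\mathrm{Id}-T^k$. Hence if $c_0$ is $t$-independent with $E\wedge c_0=E\wedge\partial_t a=-\mathcal{L}_Eb$, then $b-T^kb=-E\wedge\Theta^{k-1}c_0$, which is removed by subtracting $d_{book}(t\,\Theta^{k-1}c_0)$. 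On a non-product $U$ you cannot apply $\Theta$ to the $t$-dependent field $\partial_t a$, so $c_0$ must be produced separately. The proof of Proposition~\ref{prop:quasiisom} does this by a second division by $E$ plus a primitive; evaluating $\partial_t a$ along a section of $p$ also works. On a product domain this identity collapses your whole normalization into the paper's homotopy: $\mu-d_{book}(\Theta^{k-1}a)=\partial_t\wedge T^{k-1}a+T^kb$ for every cocycle $\mu$, leaving only the $t$-dependence handled in Proposition~\ref{prop:CIcoh}.

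Two further steps fail as stated. The first concerns top degrees, since $(\mathcal{W}^\bullet(U),E\wedge-)$ has cohomology $C^\infty(U\cap(\R\times\{0\}))$ in degree $n-1$.
\begin{itemize}
\item For $k=n-1$, the division $\partial_tb=E\wedge c$ needs $\partial_tb|_{u=0}=0$. You must extract this from the cocycle equation: differentiate $E\wedge\partial_ta=kb-E(b)$ in $t$ and set $u=0$.
\item For $k=n$ your plan breaks. Every $\partial_t\wedge f\,\partial_{u_1}\wedge\cdots\wedge\partial_{u_{n-1}}$ is a cocycle, and $\partial_t f$ need not vanish on the $t$-axis, so $a$ cannot be made $t$-independent by dividing by $E$. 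You also have no radial inversion for $t$-dependent fields on non-product $U$. The paper proves $H^n_{book}(U)=0$ with a separate explicit primitive $\eta$.
\end{itemize}

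The second concerns the kernel. Your inclusion $E\wedge\nu=-d_{book}(t\nu)$ is correct (since $d_{book}t=-E$) and more direct than the paper's diagram. The converse, however, cannot be done by $t$-expansions of smooth fields. Instead, suppose $\partial_t\wedge\nu_1+\nu_2=d_{book}(\partial_t\wedge\alpha+\beta)$ with $\nu_1\in\mathbb{W}^{k-1}$ and $\nu_2\in\mathbb{W}^k$. Then $\nu_2=-E\wedge\partial_t\beta$ and $\nu_1=E\wedge\partial_t\alpha+\mathcal{L}_E\beta$. Apply the Taylor projections, which commute with $\partial_t$ and $E\wedge-$ and satisfy $T^{k-1}\mathcal{L}_E=0$ on $\mathcal{W}^{k-1}$. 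Then evaluate at one $t_0$ with $(t_0,0)\in U$. This gives $\nu_1\in E\wedge\mathbb{W}^{k-2}$ and $\nu_2\in E\wedge\mathbb{W}^{k-1}$, which is exactly the paper's map $\mathrm{ev}_{t_0}\circ\mathcal{T}$.
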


This result is a consequence of Theorem~\ref{thm:poissoncohU} below.

Note that one can take $U=\R^n$ in the previous theorem, but considering more general open subsets will be important for the applications in \cite{BL}.


The following special cases can be directly deduced from the theorem.

\begin{example}
For $n=2$, 
$$
H^0_{book}(U) = \mathrm{span}_\R\{[1]\}\cong \R , \quad H^1_{book}(U)=\mathrm{span}_\R\{[\partial_t]\}\cong \R, \quad 
H^2_{book}(U)=0.
$$
\end{example}

\begin{example}
 For $n=3$, we obtain 
\begin{eqnarray*}
H_{book}^0(U) &\cong & \R,\\
H_{book}^1(U) & \cong & \R^4, \ \mbox{ with basis } \{ [\partial_t], [u_1\partial_{u_1}], [u_2\partial_{u_1}], [u_1\partial_{u_2}] \},\\
H_{book}^2(U) &\cong &\R^3, \ \mbox{ with basis } \{[u_1\partial_t\wedge \partial_{u_1}], [u_2\partial_t\wedge \partial_{u_1}],[u_1\partial_t\wedge \partial_{u_2}]\},\\
H_{book}^3(U) &= &0,
\end{eqnarray*}
in agreement with \cite[$\S$ 2.5]{Zeiser} (when $U=\R^3$).
\end{example}

For an arbitrary $n\geq 2$,  it  follows from the theorem that 
\begin{itemize}
\item $H_{book}^0(U)\cong \R$ (the fact that Casimir functions are constant can be directly checked from the leaf decomposition of $\Lambda_{book}$), 
\item
$H^1_{book}(U)\cong \R^{(n-1)^2}$, with basis 
$$
\{ [\partial_t], [u_i\partial_{u_j}]\ :\ (i,j)\neq(n-1, n-1) \}.
$$
As a Lie algebra, $H^1_{book}(U)$  is isomorphic to 
$$
\R\oplus \frac{\mathfrak{gl}_{n-1}(\R)}{\R \cdot\mathrm{Id}}\cong \R\oplus\mathfrak{sl}_{n-1}(\R)\cong  \frak{gl}_{n-1}(\R).
$$
\item $H^n_{book}(U)=0$, which follows from the fact that multivector fields of the form $\partial_t\wedge (u_I\partial_{u_1}\wedge\cdots\wedge\partial_{u_{n-1}})$ with $|I|=n-1$ can be written as 
$$
    \pm E\wedge \partial_t\wedge u_{(I\setminus i)}\partial_{u_1}\wedge\cdots\wedge \widehat{\partial}_{u_i}\wedge\cdots\wedge\partial_{u_{n-1}},
$$ 
for every $i\in I$.
\end{itemize}

Obtaining a basis for (and hence the dimension of $H^k_{book}(U)$, $k>1$), requires additional work, as we now explain.

Let $J = (j_1,\ldots,j_k)$ be an increasing multi-index, i.e., $1\leq j_s\leq (n-1)$ and  $j_1<\ldots<j_k$,  and denote by $J^c$ the increasing multi-index that ``complements'' $J$, in the sense that its underlying set is 
$$
\{j\in \{1,2,\ldots, n-1\}\ |\ j\notin \{j_1,\ldots,j_k\} \}.
$$
\begin{definition}
Given multi-indices $I$ and $J$, with $J$ (strictly) increasing, we say that the pair $(I,J)$ is {\em admissible} if $\mathrm{max}(I)\leq \max(J^c)$ \footnote{If $I$ or $J^c$ are empty, the definition still makes sense with the convention that $\mathrm{max}(\emptyset)=-\infty$.}. More explicitly, this means that 
\begin{itemize}
\item If $n-1\notin J$, then $(I,J)$ is admissible for any $I$ (since in this case $\max(J^c)=n-1$). 
\item If $n-1\in J$, let $r$ be the smallest element in $J$ such that $r,r+1,\ldots, n-1\in J$. Then $(I,J)$ is admissible if and only if
$I\cap \{r,r+1,\ldots,n-1\}=\emptyset$ (since in this case $r-1\notin J$ so $\max(J^c)=r-1$).
\end{itemize}
\end{definition}

\begin{theorem}\label{thm:main2}
Let $U\subset \R^n$ be an open subset as in Assumption \ref{assump:U}.
Then
$$
H^k_{book}(U) \cong \R^{b_k},
$$
where $b_k = \binom{n-1}{k}\binom{n+k-2}{k}$, with basis
$$
\left\{\ [\partial_t \wedge (u_{I'}\partial_{J'})],\ [u_I \partial_J]\;: \; \begin{array}{l}
(I',J')\, \mathrm{ and }\,\, (I,J)\, \mathrm{admissible}, \\
|I'|=|J'|=k-1,\; |I|=|J|=k
\end{array}
\right\}.
$$
\end{theorem}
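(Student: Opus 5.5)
Since Theorem~\ref{thm:main1} may be assumed, I reduce the statement to linear algebra. Write $V=\mathrm{span}_\R\{u_i\partial_{u_j}\}\cong\mathfrak{gl}_{n-1}(\R)$, so the domain of \eqref{eq:naturalhom} is $\wedge^\bullet(\R\partial_t\oplus V)=\wedge^\bullet V\oplus \partial_t\wedge\wedge^{\bullet-1}V$. Since $E\in V$, the ideal $\langle E\rangle$ splits accordingly, and
$$
H^k_{book}(U)\cong \frac{\wedge^k V}{E\wedge\wedge^{k-1}V}\;\oplus\;\partial_t\wedge\frac{\wedge^{k-1}V}{E\wedge \wedge^{k-2}V}.
$$
The image of $\wedge^kV$ in $\mathfrak{X}^k(U)$ is the space $P_k$ of multivectors $\sum u_I\partial_J$ with polynomial coefficients homogeneous of degree $k$ and $|J|=k$. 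The map $\wedge^k V\to P_k$ is not injective, but Theorem~\ref{thm:main1} says the relevant quotient is what counts. To be safe, I would redo the last step of the proof of Theorem~\ref{thm:poissoncohU} at the level of $P_k$. That is, I would show $H^k_{book}(U)\cong P_k/(E\wedge P_{k-1})\oplus \partial_t\wedge P_{k-1}/(E\wedge P_{k-2})$, which is presumably how Theorem~\ref{thm:poissoncohU} is phrased. It then suffices to show that $Q_k:=P_k/E\wedge P_{k-1}$ has basis $\{[u_I\partial_J]: (I,J)\text{ admissible},\ |I|=|J|=k\}$.

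The next step is the dimension count. The map $E\wedge\cdot$ acting on polynomial multivectors $S^\bullet(\R^{n-1})^*\otimes\wedge^\bullet\R^{n-1}$ is the Koszul differential, and it is exact except in total degree $0$. The bigraded pieces $P_{m}^{(p)}$ (coefficient degree $p$, wedge degree $m$) therefore form an exact sequence
$$
0\to P^{(p-m)}_0\to\cdots\to P^{(p-1)}_{m-1}\to P^{(p)}_m\to\cdots
$$
This gives $\dim Q_k=\sum_{j\ge0}(-1)^j\binom{n-1}{k-j}\binom{n+2k-j-2-j}{k-j}$, which I then simplify. It is easier to count admissible pairs directly and compare. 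Grouping $J$ by $m=\max(J^c)$: the set $J$ must contain $\{m+1,\dots,n-1\}$, and $I$ is a multiset of size $k$ drawn from $\{1,\dots,m\}$. This gives $\sum_m\binom{m-1}{k-(n-1-m)}\binom{m+k-1}{k}$. A Vandermonde-type identity should reduce this to $\binom{n-1}{k}\binom{n+k-2}{k}$.

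For spanning, I use a normal-form reduction. If $(I,J)$ is not admissible, let $r$ be as in the definition and pick $i\in I\cap\{r,\dots,n-1\}$, so $i\in J$. Then $u_I\partial_J=\pm u_{I\setminus i}\,u_i\partial_{u_i}\wedge\partial_{J\setminus i}$. Replacing $u_i\partial_{u_i}$ by $E-\sum_{l\neq i}u_l\partial_{u_l}$ expresses it modulo $E\wedge P_{k-1}$ as a sum of terms $u_{I\setminus i}u_l\,\partial_{u_l}\wedge\partial_{J\setminus i}$ with $l\notin J$ (the terms with $l \in J\setminus i$ vanish). Each such term replaces the index $i\ge r$ in $J$ by some $l\notin J$, so $l<r$ or $l$ lies outside the tail block. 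I would order monomials by the tail block and then lexicographically. Each replacement should strictly lower either the length of the final consecutive block of $J$ or $I$'s multiplicity in the high range, so the induction terminates at admissible pairs.

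Spanning plus the dimension count then gives a basis. Applying this to $Q_k$ and $Q_{k-1}$, and using $\binom{n-1}{k}\binom{n+k-2}{k}=\dim Q_k+\dim Q_{k-1}$ (which is exactly the Koszul alternating sum telescoped), yields $b_k$. I expect the main obstacle to be the bookkeeping: confirming that the reduction terminates under a suitable order, and proving the combinatorial identity. If the termination order is messy, I would instead prove linear independence directly by a leading-term argument with respect to the same order, and then get the dimension from independence plus spanning.
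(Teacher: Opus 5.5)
You have the same reduction as the paper: you show that the admissible classes form a basis of $Q_k=\mathbb{W}^k/E\wedge\mathbb{W}^{k-1}$, with $\mathbb{W}^k$ read as your $P_k$ exactly as in Theorem~\ref{thm:poissoncohU}. Your caution about the non-injective map $\wedge^kV\to P_k$ is justified. Your spanning argument also works. The substitution $u_i\partial_{u_i}=E-\sum_{l\neq i}u_l\partial_{u_l}$ is the computation in Lemma~\ref{lemma: goodie-goodie}. Termination is automatic, because the tail block of $(J\setminus i)\cup l$ is $\{i+1,\dots,n-1\}$, which is strictly shorter than $\{r,\dots,n-1\}$. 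If you take $i=\max I$, every new term is admissible after one step.

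The gap is linear independence, which you want to deduce from a count that is aimed at the wrong number. Your sum $\sum_m\binom{m-1}{k-(n-1-m)}\binom{m+k-1}{k}$ counts admissible pairs of degree $k$. For spanning plus counting to give a basis, this number must equal $\dim Q_k$, not $\binom{n-1}{k}\binom{n+k-2}{k}=\dim\mathbb{W}^k$. For $n=3$, $k=1$ there are three admissible monomials ($u_1\partial_{u_1}$, $u_2\partial_{u_1}$, $u_1\partial_{u_2}$) against $\binom21\binom21=4$. So the Vandermonde identity you hope for is false, and it contradicts your own final use of $b_k=\dim Q_k+\dim Q_{k-1}$. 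Your alternating sum is also misprinted: its $j$-th term should be $\binom{n-1}{k-j}\binom{n+k-j-2}{k-j}$. Also, the only cohomology of $E\wedge-$ on polynomial multivectors sits in coefficient degree $0$ and wedge degree $n-1$, not in "total degree $0$". So the diagonal sequence you use is exact for $n\geq 2$, as proved in \S\ref{subsec:dimensions}.

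The identity you actually need is $\#\mathrm{adm}_k+\#\mathrm{adm}_{k-1}=\binom{n-1}{k}\binom{n+k-2}{k}$. Together with $\#\mathrm{adm}_0=1=\dim Q_0$, it gives $\#\mathrm{adm}_k=\dim Q_k$ by induction, but you leave it unproved. The missing idea is Lemma~\ref{lemma: good_bad}. The map $u_I\partial_J\mapsto u_{I\cup m}\partial_{J\cup m}$, with $m=\max J^c$, is a bijection from admissible monomials of degree $k-1$ onto non-admissible monomials of degree $k$. Moreover, for admissible $u_I\partial_J$ it picks out the \emph{only} non-admissible term of $E\wedge u_I\partial_J=\sum_{s\notin J}\pm u_{I\cup s}\partial_{J\cup s}$.

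This yields the count, and it also gives independence directly with no counting at all. Your spanning step in degree $k-1$ shows $E\wedge\mathbb{W}^{k-1}=E\wedge\mathbb{W}^{k-1}_{adm}$. By the bijection, every nonzero element of $E\wedge\mathbb{W}^{k-1}_{adm}$ has a nonzero non-admissible component, so it meets $\mathbb{W}^k_{adm}$ only in $0$. This is exactly the "leading term" your fallback would require. It is how the paper proves Lemma~\ref{lemma: goodie-goodie} and Theorem~\ref{thrm:complement}; the value of $b_k$ then follows from exactness, as you say.
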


\section{Proofs of the main results}\label{sec:proofs}

We start by laying out the general strategy to prove Theorems \ref{thm:main1} and \ref{thm:main2}.

Consider an (nonempty) open subset $U\subseteq \R^n$, with coordinates $(t, u_1,\ldots u_{n-1})$. 

Define 
$$
\mathbb{W}^\bullet := \wedge^\bullet \mathrm{span}_\R\{ u_i\partial_{u_j}\;:\; i,j=1,\ldots, n-1\}, 
$$ 
so that $\mathbb{W}^k$ can be regarded as the subspace of $\mathfrak{X}^k(U)$ consisting of $k$-vector fields on $U$ with coefficients given by homogeneous polynomials of degree $k$ only depending on the coordinates $(u_1,\ldots,u_{n-1})$. 
Note that $u_I\partial_J$, with $|I|=|J|=k$, form a basis for $\mathbb{W}^k$, from where it follows that 
\begin{equation}\label{eq:dimWk}
\dim(\mathbb{W}^k) =\binom{n-1}{k}\binom{n-2+k}{k}.
\end{equation}

We have a natural map 
$$
\mathbb{W}^{k-1}\stackrel{E\wedge - }{\longrightarrow} \mathbb{W}^k.
$$

The next subsections will perform the following steps:

\begin{itemize}

\item[{\bf Step 1.}] We will introduce a complex $(\mathcal{C}^\bullet_\mathcal{I}, \delta)$ whose cohomology in degree $k$ is given by
$$
\frac{\mathbb{W}^{k-1}}{E\wedge \mathbb{W}^{k-2}}\oplus \frac{\mathbb{W}^k}{E\wedge \mathbb{W}^{k-1}}.
$$
See Theorem~\ref{prop:CIcoh}.

\item[{\bf Step 2.}] For $U\subset\R^n$ satisfying Assumption \ref{assump:U}, we construct a quasi-isomorphism
$\mathcal{T}: (\mathfrak{X}^\bullet(U), d_{book}) \to (\mathcal{C}_\mathcal{I}^\bullet,\delta)$, whose inverse map in cohomology is 
\begin{equation}\label{eq:decompk}
\frac{\mathbb{W}^{k-1}}{E\wedge \mathbb{W}^{k-2}}\oplus \frac{\mathbb{W}^k}{E\wedge \mathbb{W}^{k-1}} \stackrel{\sim}{\longrightarrow} H^k_{book}(U), \; ([a],[b])\mapsto [\partial_t\wedge a + b].
\end{equation}
We deduce from this result that the natural map 
$$
\wedge^\bullet\mathrm{span}_\R\{\partial_t,u_i\partial_{u_j}\}\to H^\bullet_{book}(U)
$$
is surjective and has kernel $\langle E\rangle$, thereby proving Theorem~\ref{thm:main1}.
See Theorem~\ref{thm:poissoncohU}.

\item[{\bf Step 3.}] We prove (see Theorem~\ref{thrm:complement}) that 
$$
\mathbb{W}^k=\mathrm{span}\{u_I \partial_J, \;\; (I,J)\, \mathrm{admissible}\}\oplus E\wedge \mathbb{W}^{k-1},
$$ 
which implies Theorem~\ref{thm:main2}.

\end{itemize}

\subsection{The complex $(\mathcal{C}^\bullet_\mathcal{I}, \delta)$}
Let $\mathcal{I}\subseteq \R$ be an open interval, and consider the complex
$\mathcal{D}_\I = (\mathcal{D}^\bullet_\I, \delta_\mathcal{D})$ given by 
$$
\mathcal{D}^k=C^\infty(\mathcal{I})\otimes\mathbb{W}^k, \,\mbox{ and }\;
\delta_\mathcal{D}(\alpha) =  E\wedge \partial_t\alpha.
$$ 
(Writing $\alpha= f(t)\otimes a$ with $a\in\mathbb{W}^k$, then
$E\wedge \partial_t\alpha = f'(t)\otimes ( E\wedge a)$.)

We will be interested 
in the complex 
$$
    \mathcal{C}^\bullet_{\mathcal{I}} =\mathcal{D}^{\bullet-1}_{\mathcal{I}}+\overline{\mathcal{D}}_{\mathcal{I}}^\bullet,
$$ 
where $\overline{\mathcal{D}}_{\mathcal{I}}^\bullet$ is the complex as $\mathcal{D}_{\mathcal{I}}^\bullet$ but with $-\delta_\mathcal{D}$ as differential, i.e.,  $\mathcal{C}_\I = (\mathcal{C}^\bullet_\mathcal{I},\delta)$ is given by 
$$
\mathcal{C}^k_\mathcal{I} := C^\infty(\mathcal{I})\otimes (\mathbb{W}^{k-1}\oplus \mathbb{W}^k), \qquad  \delta(\alpha\oplus \beta) = E\wedge \partial_t \alpha \oplus (- E\wedge \partial_t \beta).
$$
Our goal in this section is to calculate the cohomology of $\mathcal{C}_\I$,
$$
H^\bullet(\mathcal{C}_\I) = H^{\bullet-1}(\mathcal{D}_\I) \oplus H^\bullet(\mathcal{D}_\I).
$$

We will use the following preliminary result. For an open subset $U\subset\R^n= \R\times\R^{n-1}= \{(t,u_1\ldots,u_{n-1})\}$, $n\geq 2$, recall the notation
$$
\mathcal{W}^k(U) = \{\mu\in\mathfrak{X}^k(U) \ : \ \iota_{dt}\mu =0 \}.
$$
The operator $E\wedge- : \mathcal{W}^{k}(U) \to \mathcal{W}^{k+1}(U)$ makes $(\mathcal{W}^\bullet(U), \ E\wedge -)$ into a complex.

\begin{lemma}\label{lem:timeDependent_flatness} 
    $$
    H^k(\mathcal{W}^\bullet(U), \ E\wedge -) \cong \left\{ \begin{array}{cc}
        0 &  \mbox{if } k\neq n-1\\
        C^\infty\big(U\cap (\R\times\{0\})\big) &  \mbox{ if } k=n-1
    \end{array}\right. .
    $$
    For $k=n-1$, the isomorphism is given by the restriction map $f(t,u)\partial_{u_1}\wedge\cdots\wedge\partial_{u_{n-1}}\mapsto f(t,0)$.
    (We use the convention that $C^\infty(\emptyset)=0$.)
\end{lemma}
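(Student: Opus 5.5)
The complex $(\mathcal{W}^\bullet(U), E\wedge -)$ is the Koszul complex of the $n-1$ functions $u_1,\ldots,u_{n-1}$ acting on the $C^\infty(U)$-module with basis $\partial_{u_1},\ldots,\partial_{u_{n-1}}$, with $t$ as a parameter. The plan is to compute it with a Koszul-type contracting homotopy away from the $t$-axis, and then to globalize with a partition of unity. Indeed, $E\wedge-$ is $C^\infty(U)$-linear, so the complex is a complex of sheaves of $C^\infty$-modules on $U$. Cohomology can therefore be computed locally and patched, because these are fine sheaves.

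First consider the open set $U_i=\{u_i\neq 0\}\cap U$. There the operator $h_i=\frac{1}{u_i}\iota_{du_i}$ satisfies
$$
E\wedge h_i(\mu)+h_i(E\wedge\mu)=\frac{1}{u_i}\iota_{du_i}(E)\,\mu=\mu .
$$
Hence the complex is acyclic on each $U_i$, and likewise on $U\setminus(\R\times\{0\})=\bigcup U_i$. To get a global homotopy there, choose a partition of unity $\{\rho_i\}$ subordinate to $\{U_i\}$ and set $h=\sum\rho_i h_i$. This is still a contracting homotopy, since $E\wedge-$ is linear over functions. Equivalently, one can use $h=\frac{1}{|u|^2}\iota_{\sum u_i du_i}$ directly, which satisfies $E\wedge h+hE\wedge=\mathrm{id}$ because $\iota_{\sum u_idu_i}E=|u|^2$.

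Near the $t$-axis the argument needs the smooth Koszul lemma: $u_1,\ldots,u_{n-1}$ form a regular sequence on $C^\infty$ in the sense of the Hadamard lemma. A Koszul cocycle in degree $k<n-1$ is then a coboundary, and in top degree the cokernel of $E\wedge-:\mathcal{W}^{n-2}\to\mathcal{W}^{n-1}$ is $C^\infty(U)/(u_1,\ldots,u_{n-1})$.

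To prove this locally I would apply Hadamard's lemma on sets convex in the $u$-directions, writing $f(t,u)-f(t,0)=\sum u_i g_i(t,u)$ with $g_i=\int_0^1\partial_{u_i}f(t,su)\,ds$. Acyclicity in lower degrees follows by the standard induction on the number of variables. Concretely, one can use the homotopy obtained from integrating along the radial flow, $K\mu=\int_0^1 (\iota_{du}\ldots)$, i.e. the de Rham–Koszul homotopy of the polynomial Koszul complex transplanted to smooth coefficients via Hadamard's formula; the homotopy formula then holds up to a term supported in top degree at $u=0$.

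To globalize, cover $U$ by $U\setminus(\R\times\{0\})$ together with small product neighborhoods $I\times B$ of axis points. Patch the local primitives with a partition of unity, using linearity again: if $\mu=E\wedge\nu_\alpha$ on $V_\alpha$, then $\mu=E\wedge\sum\rho_\alpha\nu_\alpha$. For the top degree, the restriction map $f\partial_{u_1}\wedge\cdots\wedge\partial_{u_{n-1}}\mapsto f(t,0)$ is surjective by extending $g(t)$ via a cutoff. Its kernel consists of the $f$ vanishing on the axis, which lie in $(u_1,\ldots,u_{n-1})$ by patching local Hadamard decompositions. Membership in that ideal is exactly the condition to be in $E\wedge\mathcal{W}^{n-2}$, since $E\wedge(\pm g_i\,\partial_{u_1}\wedge\cdots\widehat{\partial_{u_i}}\cdots)$ produces $u_ig_i$.

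The main obstacle is exactness in intermediate degrees $0<k<n-1$ near the axis. There I would prove the smooth Koszul exactness on balls $I\times B$ by induction on $n-1$. Write $\mu=\mu'+du$-free parts with respect to $u_{n-1}$, reduce modulo $u_{n-1}$ using Hadamard in that variable (with parametric dependence), and apply the induction hypothesis to the restriction to $u_{n-1}=0$.
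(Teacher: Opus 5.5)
Your proposal is correct and follows the same route as the paper's Appendix~B. Your homotopy $\frac{1}{|u|^2}\iota_{\sum u_i du_i}$ off the $t$-axis produces exactly the primitive $\xi=\frac{1}{|u|^2}\sum_i u_i a_i$ of Step~1; your induction on the number of $u$-variables via Hadamard's lemma in $u_{n-1}$ on cylinders $\mathcal{I}\times B$ is Step~2 (the maps $\mathcal{A},\mathcal{B}$ with the homotopy built from $\Upsilon$); and your partition-of-unity patching of local primitives, using $C^\infty$-linearity of $E\wedge-$ together with the extension argument in top degree, is Step~3. The radial-homotopy alternative you mention is only sketched, but the proof does not need it, since the inductive argument already covers the intermediate degrees.
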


The proof of this lemma can be found in Appendix~\ref{app:eulercoh}.

Let $U=\mathcal{I}\times \R^{n-1}\subseteq \R^n$, and consider, for each $k=0,1,2,\ldots, n-1$, the Taylor-type operator
$$
T^{k}\colon \mathcal{W}^ k(U)\to C^\infty(\mathcal{I})\otimes \mathbb{W}^{k}
$$
given by
\begin{equation}\label{eq: def_k_th_taylor} 
f_J\partial_J \ \mapsto \ \left(\frac{1}{k!}\sum_{(i_1,\ldots,i_k)}\frac{\partial^k f_J}{\partial u_{i_1}\ldots \partial u_{i_k}}(t,0)u_{i_1}\ldots u_{i_k} \ \right)\partial_J.
\end{equation}
 (Some of the relevant properties of these operators can be found in App.~\ref{app:taylor}.)


\begin{proposition}\label{prop:CIcoh}
    For any $t_0\in\mathcal{I}$, the map
    $$
        H^\bullet(\mathcal{D}_\I) \to \frac{\mathbb{W}^\bullet}{E\wedge\mathbb{W}^{\bullet-1}}, \quad [f\otimes a]\mapsto [f(t_0)a],
    $$ 
    is an isomorphism, with inverse map given by $[a]\mapsto [1\otimes a]$. 
    
    In particular, we obtain  an isomorphism
    \begin{equation}\label{eq:C_I}
        H^\bullet(\mathcal{C}_\I) \stackrel{\sim}{\to} \ \frac{\mathbb{W}^{\bullet-1}}{E\wedge\mathbb{W}^{\bullet-2}}\oplus \frac{\mathbb{W}^\bullet}{E\wedge\mathbb{W}^{\bullet-1}}, \quad [(f_1\otimes a_1, f_2\otimes a_2)] \mapsto ([f_1(t_0)a_1],[f_2(t_0)a_2]).
    \end{equation}
\end{proposition}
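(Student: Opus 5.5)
The plan is to treat $\mathcal{D}_\I$ as a tensor product of two finite-dimensional-in-spirit complexes and use elementary linear algebra. Since $\mathbb{W}^\bullet$ is finite dimensional, every element of $\mathcal{D}^k_\I=C^\infty(\I)\otimes\mathbb{W}^k$ can be written as $\alpha=\sum_m f_m\otimes w_m$ with $\{w_m\}$ a fixed basis of $\mathbb{W}^k$. Equivalently, $\alpha$ is a smooth curve $t\mapsto\alpha(t)\in\mathbb{W}^k$. In this description $\delta_\D\alpha=E\wedge\alpha'(t)$, where $E\wedge-:\mathbb{W}^k\to\mathbb{W}^{k+1}$ is a fixed linear map, call it $e$. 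So the cocycle condition reads $e(\alpha'(t))=0$ for all $t$, i.e.\ $\alpha'(t)\in\ker e$.

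First I show that the map $\Phi:[\alpha]\mapsto[\alpha(t_0)]\in\mathbb{W}^k/e(\mathbb{W}^{k-1})$ is well defined. If $\alpha=\delta_\D\beta=e(\beta')$, then $\alpha(t_0)=e(\beta'(t_0))\in e(\mathbb{W}^{k-1})$, so exact elements go to zero.

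Next I show that the candidate inverse $\Psi:[a]\mapsto[1\otimes a]$ is well defined. The element $1\otimes a$ is closed because its $t$-derivative vanishes. If $a=e(c)$, then $1\otimes a=\delta_\D(t\otimes c)$, since $\partial_t(t\otimes c)=1\otimes c$. Clearly $\Phi\circ\Psi=\mathrm{id}$.

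The key step is $\Psi\circ\Phi=\mathrm{id}$: every cocycle $\alpha$ is cohomologous to the constant $1\otimes\alpha(t_0)$. Since $\alpha'(t)\in\ker e$, I want to write $\alpha'=e(\gamma)$ for a smooth curve $\gamma$ in $\mathbb{W}^{k-1}$; then $\beta(t)=\int_{t_0}^t\gamma$ gives $\delta_\D\beta=e(\gamma)=\alpha-\alpha(t_0)$, as required. This needs $\ker e=\im e$ on $\mathbb{W}^\bullet$, i.e.\ exactness of the finite-dimensional complex $(\mathbb{W}^\bullet,E\wedge-)$ in degree $k$. This exactness is the main obstacle.

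To get it, I will apply Lemma~\ref{lem:timeDependent_flatness} with $U=\I\times\R^{n-1}$.
\begin{itemize}
\item Take $w\in\mathbb{W}^k$ with $E\wedge w=0$. By the lemma, for $k\neq n-1$ there is $\eta\in\mathcal{W}^{k-1}(U)$ with $w=E\wedge\eta$.
\item Apply the Taylor operator $T^{k-1}$ to $\eta$. Since $w$ and $E$ are homogeneous polynomial, the degree-$k$ homogeneous part of $E\wedge\eta$ at $u=0$ is $E\wedge T^{k-1}(\eta)$. This uses that $E\wedge-$ raises polynomial degree by one, in the spirit of App.~\ref{app:taylor}.
\item Hence $w=E\wedge T^{k-1}(\eta)$ evaluated at any fixed $t$, which lies in $e(\mathbb{W}^{k-1})$.
\end{itemize}
For $k=n-1$ the element $w$ has coefficient vanishing at $u=0$, since it is homogeneous of degree $n-1\ge1$ (when $n\ge2$). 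So it maps to zero under the restriction isomorphism of the lemma, and the same argument applies. For $k=0$ there is nothing to prove, because $e$ is injective on $\mathbb{W}^0=\R$ when $n\ge2$.

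Having $\ker e=\im e$ in $\mathbb{W}^k$, I choose a linear right inverse $s:\im e\to\mathbb{W}^{k-1}$ and set $\gamma=s\circ\alpha'$, which is smooth. This completes the isomorphism $H^\bullet(\D_\I)\cong\mathbb{W}^\bullet/E\wedge\mathbb{W}^{\bullet-1}$.

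The statement for $\mathcal{C}_\I$ then follows from the splitting $H^\bullet(\mathcal{C}_\I)=H^{\bullet-1}(\D_\I)\oplus H^\bullet(\overline{\D}_\I)$. Negating the differential does not change cocycles or coboundaries, so $H^\bullet(\overline{\D}_\I)=H^\bullet(\D_\I)$, and applying the isomorphism to each summand gives \eqref{eq:C_I}.
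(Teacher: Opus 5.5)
Your strategy is sound and uses the same ingredients as the paper: Lemma~\ref{lem:timeDependent_flatness} together with the Taylor projection $T^{k-1}$ to get polynomial primitives, then integration in $t$. However, the key step $\Psi\circ\Phi=\mathrm{id}$ fails as written. If $\alpha'=e(\gamma)$ and $\beta(t)=\int_{t_0}^t\gamma$, then $\delta_\D\beta=E\wedge\partial_t\beta=e(\gamma)=\alpha'$, not $\alpha-\alpha(t_0)$. The differential $\delta_\D$ itself contains a $t$-derivative, so a single integration of $\gamma$ only gives back $\alpha'$. Concretely, $\alpha=t\otimes E\in\D^1_\I$ is a cocycle because $E\wedge E=0$. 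Your recipe forces $\gamma=1$ and $\beta=(t-t_0)\otimes 1$, so $\delta_\D\beta=1\otimes E$, whereas $\alpha-1\otimes\alpha(t_0)=(t-t_0)\otimes E$. The correct primitive here is $\tfrac12(t-t_0)^2\otimes 1$.

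The repair is short, and there are two ways to do it.
\begin{itemize}
\item Integrate twice: set $\beta(t)=\int_{t_0}^t\int_{t_0}^{r}\gamma(\rho)\,d\rho\,dr$. Then $\delta_\D\beta=e\bigl(\int_{t_0}^t\gamma\bigr)=\int_{t_0}^t\alpha'=\alpha-\alpha(t_0)$. This is precisely why the paper picks $\zeta$ with $\partial_t(\partial_t\zeta)=\xi$, and then absorbs the $t$-independent remainder $E\wedge b$ as $\delta_\D(tb)$.
\item Integrate once, applied to a different curve: note that $e(\alpha(t)-\alpha(t_0))=\int_{t_0}^t e(\alpha'(r))\,dr=0$. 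So $t\mapsto\alpha(t)-\alpha(t_0)$ is a smooth curve in $\ker e=\im e$, and you can set $\beta(t)=\int_{t_0}^t s\bigl(\alpha(r)-\alpha(t_0)\bigr)\,dr$.
\end{itemize}
With either correction the argument is complete.

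The remaining difference from the paper is organizational. You first prove exactness of the finite-dimensional complex $(\mathbb{W}^\bullet,E\wedge-)$, and then handle smooth $t$-dependence with a linear right inverse. The paper proves that same exactness by the same Lemma-plus-Taylor argument, but separately in \S\ref{subsec:dimensions}. In this proof it instead applies Lemma~\ref{lem:timeDependent_flatness} directly to the $t$-dependent element $\partial_t\mu$ and Taylor-projects the resulting primitive.
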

\begin{proof}
Consider the map  
$$
\mathrm{ev}_{t_0}\colon\D^\bullet_\I\to\frac{\mathbb{W}^\bullet}{E\wedge \mathbb{W}^{\bullet-1}},\;\; f\otimes a\mapsto [f(t_0)a].
$$
Since $\delta_\D(1\otimes a) = \partial_t(1) E\wedge a=0$, the injection $1\otimes\mathrm{id}\colon\mathbb{W}^k\to C^\infty(\I)\otimes\mathbb{W}^k$ takes values in the kernel of $\delta^k_{\D}$. Since $\mathrm{ev}_{t_0}\circ(1\times\mathrm{id})(a)=\mathrm{ev}_{t_0}(1\otimes a)=[a]$, the following diagram commutes:
    $$
    \xymatrix{
     \mathbb{W}^k \ar[r]^{1\otimes\mathrm{id}\ } \ar@{->>}[dr]& \ker \delta_\D^k \ar[d]^{\mathrm{ev}_{t_0}}\\
     & \frac{\mathbb{W}^k}{E\wedge\mathbb{W}^{k-1}},
    }
    $$
    hence $\mathrm{ev}_{t_0}\colon \ker \delta^k_\D\to \frac{\mathbb{W}^k}{E\wedge\mathbb{W}^{k-1}}$ is surjective. We will check that the kernel of this map is $\im \delta^{k-1}_\mathcal{D}$, i.e.,
    $$
     \ker \delta^k_\D\cap\ker (\mathrm{ev}_{t_0}) = \im \delta^{k-1}_\D,
    $$
    which yields an isomorphism $H^k(\mathcal{D}_\I)\to \frac{\mathbb{W}^k}{E\wedge\mathbb{W}^{k-1}}$ and concludes the proof of the proposition.

Since $\mathrm{ev}_{t_0}(\del_\D^{k-1}(\mu))= [E\wedge\partial_t\mu|_{t=t_0}]=0$, it is clear that
$\im \delta^{k-1}_\D\subset \ker \delta^k_\D\cap \ker (\mathrm{ev}_{t_0})$.
We will check that 
    $$
        \ker \delta^k_\D\cap\ker (\mathrm{ev}_{t_0}) \subseteq \im \delta^{k-1}_\D.
    $$
Let us regard $C^\infty(\I)\otimes\mathbb{W}^k$ as a subset of $\mathcal{W}^k(U)=\{\mu\in\mathfrak{X}^k(U):i_{dt}\mu=0\}$, where $U=\I\times\R^{n-1}\subset\R^n$. Take $\mu\in \ker \delta^k_\D\cap\ker (\mathrm{ev}_{t_0})\subset C^\infty(\I)\otimes\mathbb{W}^k\subset \mathcal{W}^k(U)$.  Then 
    $$
        E\wedge\partial_t\mu=0\ \mbox{ and }\ \mu|_{t=t_0}\in E\wedge\mathbb{W}^{k-1},
    $$
so $\partial_t\mu\in\mathcal{W}^k(U)$ is a cocycle in $(\mathcal{W}^\bullet(U), E\wedge -)$. By Lemma~\ref{lem:timeDependent_flatness},
    \begin{equation}\label{eq:B1_aplication}
        \partial_t\mu = E\wedge\xi, \ \mbox{ for some }\ \xi\in\mathcal{W}^{k-1}(U).
    \end{equation}
    (For $k=n-1$, since $\partial_t\mu$ has homogeneous coefficients of degree $n-1\geq 1$, we have that $\partial_t\mu|_{(t,0)}=0$, so the lemma still ensures the result.)
    
    Applying the operator $T^k\colon \mathcal{W}^k(U)\to C^\infty(\I)\otimes\mathbb{W}^k$ defined by \eqref{eq: def_k_th_taylor} on both sides of \eqref{eq:B1_aplication}, we see that $\partial_t\mu = E\wedge T^{k-1}\xi$ (see Lemma~\ref{lem:properties} (b)). So we may simply assume that $\xi\in C^\infty(\I)\otimes\mathbb{W}^{k-1}$. 

The map $\partial_t: C^\infty(\I)\otimes\mathbb{W}^{k-1} \to C^\infty(\I)\otimes\mathbb{W}^{k-1}$ is onto, so we can take ${\zeta}\in C^\infty(\I)\otimes\mathbb{W}^{k-1}$ such that $\partial_t(\partial_t{\zeta}) = \xi$. 
Note that 
$$
\partial_t(\mu - \delta_D(\zeta)) = \partial_t(\mu - E\wedge \partial_t\zeta)=0,
$$
so $\mu - \delta_D(\zeta)$ does not depend on $t$. Writing $\mu|_{t=t_0}=E\wedge a$, for $a\in \mathbb{W}^{k-1}$, we have 
$$
\mu - \delta_D(\zeta) = (\mu - \delta_D(\zeta))|_{t=t_0}=\mu|_{t=t_0}- E\wedge \partial_t\zeta|_{t=t_0}= E\wedge b,
$$
for $b= a- \partial_t\zeta|_{t=t_0} \in \mathbb{W}^{k-1}$. But $E\wedge b = \delta_D(tb)$, so $\mu = \delta_D(\zeta+ tb)$, showing that $\mu \in \im \delta^{k-1}_\D$.

    \end{proof}


\subsection{The quasi-isomorphism}
For an open subset $U\subseteq \mathbb{R}^n$ as in Assumption~\ref{assump:U}, note that $\mathcal{I}=p^{-1}(0)$ is an interval. In this case, we let 
$$
T^{k}\colon \mathcal{W}^ k(U)\to C^\infty(\mathcal{I})\otimes \mathbb{W}^{k}
$$ 
be defined by \eqref{eq: def_k_th_taylor}.

Consider the complex $(\mathfrak{X}^\bullet(U), d_{book})$.

\begin{proposition}\label{prop:quasiisom}
Let $U\subseteq \mathbb{R}^n$ be  as in Assumption~\ref{assump:U}, and let $\mathcal{I}=p^{-1}(0)$. Then the map
\begin{equation}\label{eq:Tmap}
\mathcal{T}: \mathfrak{X}^\bullet(U)\to \mathcal{C}_\mathcal{I}^\bullet, \quad
\mathcal{T}(\partial_t\wedge a+b) = T^{\bullet-1}a\oplus T^\bullet b.
\end{equation}
is a quasi-isomorphism of complexes. 
\end{proposition}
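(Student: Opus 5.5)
First I check that $\mathcal{T}$ is a chain map, and then that it is a quasi-isomorphism. For the chain map property I use Lemma~\ref{lem:PDE_cocyle}. If $\mu=\partial_t\wedge a+b$, then $d_{book}\mu=\partial_t\wedge(E\wedge\partial_t a+E(b)-kb)-E\wedge\partial_t b$, and therefore
$$\mathcal{T}(d_{book}\mu)=T^{k}(E\wedge\partial_t a)+T^k(E(b)-kb)\,\oplus\, T^{k+1}(-E\wedge\partial_t b).$$
This requires the properties of the Taylor operators from Lemma~\ref{lem:properties}. First, $T^{k+1}(E\wedge\xi)=E\wedge T^k\xi$. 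Second, $T$ commutes with $\partial_t$. Third, $T^k(E(b)-kb)=0$. The last one holds because $T^k$ extracts the degree-$k$ Taylor part of each coefficient at $u=0$; on homogeneous degree-$k$ polynomials $E$ acts as multiplication by $k$, and $E$ preserves Taylor degrees. So $\mathcal{T}d_{book}\mu=E\wedge\partial_tT^{k-1}a\oplus(-E\wedge\partial_tT^kb)=\delta\mathcal{T}\mu$.

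For the quasi-isomorphism I exhibit the inverse in cohomology. The map $\sigma:\mathcal{C}_\I\supset 1\otimes(\mathbb{W}^{k-1}\oplus\mathbb{W}^k)\to\mathfrak{X}^k(U)$, $(a,b)\mapsto\partial_t\wedge a+b$, lands in $d_{book}$-cocycles: $a,b$ are $t$-independent and $E(b)=kb$. Since $\mathcal{T}\sigma=\mathrm{id}$, Proposition~\ref{prop:CIcoh} shows that $H(\mathcal{T})$ is surjective. It remains to prove injectivity. Let $\mu=\partial_t\wedge a+b$ be a cocycle with $[\mathcal{T}\mu]=0$. I show that $\mu$ is cohomologous to $\sigma$ of something, and then that this something lies in $E\wedge\mathbb{W}$.

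The cocycle equations are $E\wedge\partial_t b=0$ and $E\wedge\partial_t a+\mathcal{L}_E b=0$, where $\mathcal{L}_Eb=E(b)-kb$. The key step, and the main obstacle, is solving the homotopy equation for the operator $\mathcal{L}_E$ on $\mathcal{W}^k(U)$ away from the degree-$k$ Taylor part. I would try the radial homotopy. For $b$ with $T^kb=0$, set
$$h(b)=\int_0^1 s^{-k-1}\,(b\circ\phi_s)\,ds,\qquad \phi_s(t,u)=(t,su),$$
which converges after subtracting the Taylor polynomial up to order $k$. This uses star-shapedness of $p(U)$ and connected fibers (so that $(t,su)\in U$); if the fibers are not products this needs care, and possibly $U$ must be handled via the $t$-coordinate per fiber. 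The lower-order terms (degree $<k$) are handled by an explicit coefficientwise inverse, since $\mathcal{L}_E$ acts by $m-k\neq0$ on homogeneous degree-$m$ parts. Then $\mathcal{L}_E h(b)=b-T^kb$, with $h$ commuting with $\partial_t$ and with $E\wedge$.

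Using $d_{book}(\partial_t\wedge c)=-\partial_t\wedge\mathcal{L}_Ec$ (up to sign), I can then replace $b$ by $T^kb$ modulo coboundaries. For the $a$-part I use $d_{book}(c)$ with $c\in\mathcal{W}$ and Lemma~\ref{lem:timeDependent_flatness}. The closedness $E\wedge\partial_t b=0$ gives $\partial_t b=E\wedge\xi$, so the $E\wedge\partial_t$-exactness reduces $b$ to being $t$-independent, exactly as in the proof of Proposition~\ref{prop:CIcoh}. After these reductions $\mu$ becomes $\partial_t\wedge a_0+b_0$ with $a_0,b_0$ constant in $\mathbb{W}$. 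Finally, $[\mathcal{T}\mu]=0$ together with Proposition~\ref{prop:CIcoh} forces $a_0,b_0\in E\wedge\mathbb{W}$. Such elements are exact, since $E\wedge c=-d_{book}(\partial_t\wedge tc)$-type formulas hold up to sign and correction terms. I expect the main difficulty to be making the homotopy $h$ well defined on general $U$ and tracking that the reductions preserve the cocycle equations.
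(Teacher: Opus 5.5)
Your check that $\mathcal{T}$ is a chain map is correct. So is your surjectivity argument via $\sigma$ and Proposition~\ref{prop:CIcoh}. The gap is in injectivity, exactly where you suspect it.

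\textbf{The radial homotopy is not available on general $U$.} The operator $h(b)=\int_0^1 s^{-k-1}(b\circ\phi_s)\,ds$ (which coincides with the paper's $\Theta^k$) requires $U$ to be invariant under $\phi_s(t,u)=(t,su)$ for $s\in[0,1]$. Assumption~\ref{assump:U} does not give this: star-shapedness of $p(U)$ plus connected fibers do \emph{not} imply $(t,su)\in U$. For instance, with $n=2$, the set $U=\{(t,u): |u|<1,\ |t-10u^2|<1\}$ satisfies the assumption and contains $(9.5,\,0.99)$ but not $(9.5,0)$. On such $U$ you cannot use $h$ to replace $b$ by $T^kb$, nor to conclude that $\mu$ becomes $\partial_t\wedge a_0+b_0$ with $a_0,b_0\in\mathbb{W}$. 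Handling non-product $U$ is precisely what the proposition adds beyond the product case, so labelling it ``needs care'' leaves the main step unproved.

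There is also a bookkeeping error. In fact $d_{book}(\partial_t\wedge c)=\partial_t\wedge E\wedge\partial_t c$, not $\mp\partial_t\wedge\mathcal{L}_Ec$. The $\mathcal{L}_E$-term comes from $d_{book}c$ for a $\partial_t$-free $c\in\mathcal{W}^{k-1}$, and it lands in the $\partial_t$-component. On a product domain the correct statement is that $b-T^kb=h(\mathcal{L}_Eb)=-E\wedge\partial_t h(a)$ for a cocycle $\mu$, where $h$ acting on $a$ is taken in degree $k-1$. Hence $\mu-d_{book}(h(a))=\partial_t\wedge T^{k-1}a+T^kb$.

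\textbf{The missing idea: do the $t$-reductions first.} Before any radial homotopy, reduce using only integration in $t$ along the interval fibers of $p$ (possible because $p$ has a global section) together with Lemma~\ref{lem:timeDependent_flatness}. The paper proceeds as follows:
\begin{itemize}
\item It proves the product case $\mathcal{I}\times U_0$ with the homotopy $\partial_t\wedge a+b\mapsto\Theta^{k-1}a$.
\item It then shows that the restriction $\mathfrak{X}^\bullet(\R\times p(U))\to\mathfrak{X}^\bullet(U)$ is a quasi-isomorphism.
\item For quasi-surjectivity, every cocycle on $U$ is shown to be cohomologous to $\partial_t\wedge(a_0+tc_0)+b_0$ with $a_0,b_0,c_0$ independent of $t$. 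The term $tc_0$ cannot be removed at this stage. These coefficients extend to $\R\times p(U)$ by connectedness of the fibers, and there $\phi_s$ is defined.
\item Quasi-injectivity comes from factoring through a small product around a point of the $t$-axis.
\end{itemize}

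Your $\sigma$-based structure can be repaired the same way: reduce to $t$-polynomial form on $U$, extend, apply the product case, and use that $H(\mathcal{C}_\R)\to H(\mathcal{C}_\I)$ is an isomorphism. You would still need to treat the edge degrees.
\begin{itemize}
\item For $k=n-1$, the condition $E\wedge\partial_t b=0$ is vacuous, and Lemma~\ref{lem:timeDependent_flatness} gives $\partial_t b=E\wedge\xi$ only once you know $\partial_tb|_{(t,0)}=0$. This follows from $E\wedge\partial_t a+\mathcal{L}_Eb=0$ evaluated at $u=0$.
\item For $k=n$, the analogous reduction of the $a$-component fails. This is why the paper proves $H^n_{book}(U)=0$ separately, by an explicit primitive.
\end{itemize}
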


As a first step to prove this proposition, we prove that it holds in the case where $U$ is a direct product. 


\begin{lemma}[Product Case]\label{lemma: product_cohomology} 
    Let $U=\mathcal{I}\times U_0$, where $\mathcal{I}$ is an interval and $U_0$ is star-shaped with respect to $0\in\R^{n-1}$. Then Prop.~ \ref{prop:quasiisom} holds for $U$.
\end{lemma}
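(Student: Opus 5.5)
Here is the plan. Write $\mu=\partial_t\wedge a+b$ and use Lemma~\ref{lem:PDE_cocyle}. First I will check that $\mathcal{T}$ is a chain map, i.e.
$$
\mathcal{T}(d_{book}\mu)=\delta(\mathcal{T}\mu).
$$
The $\partial_t$-component of $d_{book}\mu$ is $E\wedge\partial_t a+E(b)-kb$. The other component is $-E\wedge\partial_t b$. Three properties of the Taylor operators (Lemma~\ref{lem:properties}) are needed:
\begin{itemize}
\item $T$ commutes with $\partial_t$;
\item $T^{k}(E\wedge\xi)=E\wedge T^{k-1}\xi$;
\item $T^k(E(b)-kb)=0$.
\end{itemize}
The third holds because $E$ acts on degree-$m$ homogeneous Taylor parts by multiplication by $m$, so it kills exactly the degree-$k$ part. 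With these, the identity follows directly.

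To show $\mathcal{T}$ is a quasi-isomorphism, I will compose with the isomorphism \eqref{eq:C_I} of Proposition~\ref{prop:CIcoh}. This reduces the claim to two statements.

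\emph{Surjectivity.} Every class in $\frac{\mathbb{W}^{k-1}}{E\wedge\mathbb{W}^{k-2}}\oplus\frac{\mathbb{W}^{k}}{E\wedge\mathbb{W}^{k-1}}$ is hit by the cocycle $\partial_t\wedge a+b$ with $a,b$ constant in $\mathbb{W}$. Such elements are $d_{book}$-closed, since $E(b)=kb$ for degree-$k$ homogeneous coefficients. Moreover $\mathcal{T}$ fixes them.

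\emph{Injectivity.} Let $\mu$ be a cocycle whose image $\mathcal{T}\mu$ is exact. I must show $\mu$ is exact. The plan has four steps.

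\begin{enumerate}
\item Subtract constant-coefficient representatives. Using the explicit primitives from the proof of Proposition~\ref{prop:CIcoh} (elements like $\zeta+tb$, which have polynomial coefficients and lie in $\mathfrak{X}(U)$), I subtract $d_{book}$ of an element of $C^\infty(\I)\otimes(\mathbb{W}^{k-2}\oplus\mathbb{W}^{k-1})$. This works because $\mathcal{T}$ restricted to such polynomial elements is the identity and $d_{book}$ agrees with $\delta$ there. After this, I may assume $\mathcal{T}\mu=0$.

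\item Analyse the cocycle equations. These are
$$
E\wedge\partial_t b=0,\qquad E\wedge\partial_t a+E(b)-kb=0.
$$

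\item Build the primitive by homotopy. I will construct $\nu=\partial_t\wedge c+e$ with $d_{book}\nu=\mu$, using a homotopy operator for $\mathcal{L}_E$ on $\mathcal{W}(U)$, which is available because $U_0$ is star-shaped. Define
$$
h(b)=\int_0^1 s^{-1}\,\phi_s^*(b)\,ds,
$$
where $\phi_s$ is scaling by $s$ in $u$. On the complement of the degree-$k$ Taylor part, $\mathcal{L}_E=E(\cdot)-k$ is invertible, and the integral converges precisely because $T^k b=0$ after Taylor subtraction. The flatness issue at degree $k$ is handled by the vanishing of $T^k$.

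Concretely, set $e$ so that $\mathcal{L}_E e=b$, which solves the $\partial_t$-component equation, correcting $a$ appropriately. Then $b':=\mu-d_{book}(e)$ has $\partial_t$-component equal to zero. Next, solve $-E\wedge\partial_t b'=\ldots$ using Lemma~\ref{lem:timeDependent_flatness} (exactness of $E\wedge-$) together with an antiderivative in $t$, which is available since $\I$ is an interval. This mirrors the argument of Proposition~\ref{prop:CIcoh}, and the degree-$(n-1)$ case is handled by the restriction to $u=0$ vanishing.

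\item Repeat the invertibility argument for $\mathcal{L}_E$ to absorb what remains.
\end{enumerate}

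The main obstacle is Step 3. It requires inverting $\mathcal{L}_E-k$ on multivector fields with vanishing degree-$k$ Taylor coefficient, smoothly in $t$, on a star-shaped $U_0$. Writing $f=\sum_{m\neq k}$ (Taylor) $+$ remainder of order $k+1$, the formula $\int_0^1 s^{-k-1}\bigl(f(t,su)-\text{poly}\bigr)\,ds$ converges. I expect to treat the low-degree polynomial part separately, and to check that the resulting primitive is compatible with the $E\wedge-$ equation through the relation $[\mathcal{L}_E,E\wedge]=0$.
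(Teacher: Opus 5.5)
\paragraph{Review.} Your chain-map check, your surjectivity argument, and the reduction to $\mathcal{T}\mu=0$ in Step 1 are all fine. You have also found the right tool: inverting $E-k$ away from the degree-$k$ Taylor part, which is the paper's operator $\Theta^k$ from \eqref{eq: def_lambda_operator}.

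The gap is in Step 3, which is the heart of injectivity and is set up incorrectly. For $e\in\mathcal{W}^{k-1}(U)$, Lemma~\ref{lem:PDE_cocyle} gives $d_{book}e=\partial_t\wedge(E-(k-1)\mathrm{Id})e-E\wedge\partial_t e$. So to remove the $\partial_t$-component of $\mu$ you must solve $(E-(k-1)\mathrm{Id})e=a$, not ``$\mathcal{L}_Ee=b$'', which mixes degrees $k-1$ and $k$. This equation is solvable because Step 1 gives $T^{k-1}a=0$: take $e=\Theta^{k-1}a$ and use Proposition~\ref{prop: Theta_properties}(e). What follows in your plan is not an argument as written. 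You propose to ``solve $-E\wedge\partial_t b'=\ldots$'' using Lemma~\ref{lem:timeDependent_flatness} and a $t$-antiderivative, and then to ``repeat'' in Step 4. But to find $e'$ with $-E\wedge\partial_t e'=b'$ you would first need $b'\in E\wedge\mathcal{W}^{k-1}(U)$, and you never establish that.

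The missing observation is that nothing remains. The leftover $b'=\mu-d_{book}e=b+E\wedge\partial_t\Theta^{k-1}a$ vanishes identically. By Proposition~\ref{prop: Theta_properties}(a),(b), $E\wedge\partial_t\Theta^{k-1}a=\Theta^k(E\wedge\partial_t a)$. The cocycle equation gives $E\wedge\partial_t a=-(E-k\mathrm{Id})b$. Hence, by (e),
\[
b'=b-\Theta^k(E-k\mathrm{Id})b=T^kb=0 .
\]
So $\mu=d_{book}(\Theta^{k-1}a)$, and no appeal to Lemma~\ref{lem:timeDependent_flatness}, $t$-antiderivatives, or a second round is needed.

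With this fix your proof closes, and it is the paper's argument in unpacked form. The paper takes $\iota\colon\mathcal{C}^\bullet_{\mathcal{I}}\to\mathfrak{X}^\bullet(U)$, $a\oplus b\mapsto\partial_t\wedge a+b$, which is a chain map with $\mathcal{T}\circ\iota=\mathrm{Id}$. It then shows that $\mathcal{H}(\partial_t\wedge a+b)=\Theta^{k-1}(a)$ satisfies $d_{book}\mathcal{H}+\mathcal{H}d_{book}=\mathrm{Id}-\iota\circ\mathcal{T}$. This gives the quasi-isomorphism at once, with no need for Proposition~\ref{prop:CIcoh} or the preliminary reduction to $\mathcal{T}\mu=0$.
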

		\begin{proof}
			We will prove that the natural inclusion
			$$
            \iota\colon \mathcal{C}_\mathcal{I}^\bullet =  (C^\infty(\mathcal{I})\otimes\mathbb{W}^{\bullet-1})\oplus (C^\infty(\mathcal{I})\otimes\mathbb{W}^\bullet)\to\mathfrak{X}^\bullet(U), \ \ a\oplus b\mapsto \partial_t\wedge a + b$$ 
			is a quasi-inverse for $\mathcal{T}$. It follows from Lemma \ref{lem:PDE_cocyle} that $\iota$ is a map of complexes. It is also clear that $\mathcal{T}\circ \iota = \mathrm{Id}_{\mathcal{C}_{\mathcal{I}}}$. So we just need to find a homotopy between $\mathrm{Id}_{\mathfrak{X}^\bullet(U)}$ and $\iota\circ\mathcal{T}$.

For each $k=0,1 \ldots$, consider the operator $\Theta^k\colon C^\infty(U)\to C^\infty(U)$ defined by formula \eqref{eq: def_lambda_operator} (in App.~\ref{app:taylor}); we have an induced map $\mathcal{W}^k(U)\to \mathcal{W}^k(U)$, $f_I\partial_I\mapsto \Theta^k(f_I)\partial_I$, that we keep denoting by $\Theta^k$. It follows from items (a) and (d) of Proposition \ref{prop: Theta_properties} that
$$
\Theta^k \partial_t = \partial_t\Theta^k, \quad \mbox{ and } \; \Theta^k(E\wedge-) =E\wedge \Theta^k(-).
$$
By Proposition \ref{prop: Theta_properties} (e), the following identity also holds:
$$
        \Theta^k\circ (E-k\mathrm{Id}) = (E-k\mathrm{Id})\circ \Theta^k = \mathrm{Id}-T^k.
$$
Let
			$$
            \mathcal{H}^k\colon \mathfrak{X}^k(U)\to \mathfrak{X}^{k-1}(U), \ \ \mu=\partial_t\wedge a +b\mapsto \Theta^{k-1}(a).
            $$
            Then
			\begin{eqnarray*}
				d^{k-1}_{book}\mathcal{H}^k(\mu)  + \mathcal{H}^{k+1}d^k_{book}(\mu)&=&  d^{k-1}_{book}(\Theta^{k-1}(a)) + \Theta^k(E\wedge\partial_ta+E(b)-kb)\\
                &=& \partial_t\wedge (E-(k-1)\mathrm{Id})(\Theta^{k-1}(a)) -E\wedge\partial_t\Theta^{k-1}(a) \\
                & & + E\wedge\partial_t\Theta^{k-1}(a)+ (b-T^kb) \\
                &=& \partial_t\wedge (a-T^{k-1}a) + b-T^kb \\
                &=& \mu - \iota\circ\mathcal{T}(\mu).
			\end{eqnarray*}
	\end{proof}

To extend the result to more general domains $U$, we need an additional lemma.

\begin{lemma}\label{lem:homogeneous_classes}
    If $U=\mathcal{I}\times U_0$, where $\mathcal{I}$ is an open interval and $U_0$ is an open set star-shaped with respect to $0\in\R^{n-1}$, then $\mathfrak{X}^\bullet(\R^n)\to\mathfrak{X}^\bullet(U)$ is a quasi-isomorphism. 
    If $U\subset\R^n$ is an open set with $U\cap (\R\times\{0\})\neq\emptyset$, then the restriction map $\mathfrak{X}^\bullet(\R^n)\to\mathfrak{X}^\bullet(U)$ is quasi-injective. 
\end{lemma}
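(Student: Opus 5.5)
The first claim follows from Lemma~\ref{lemma: product_cohomology} applied twice, once for $\R^n=\R\times\R^{n-1}$ and once for $U=\mathcal{I}\times U_0$. Consider the commutative square formed by the restriction $r\colon\mathfrak{X}^\bullet(\R^n)\to\mathfrak{X}^\bullet(U)$, the maps $\mathcal{T}_{\R^n}\colon \mathfrak{X}^\bullet(\R^n)\to\mathcal{C}^\bullet_{\R}$ and $\mathcal{T}_U\colon\mathfrak{X}^\bullet(U)\to\mathcal{C}^\bullet_{\mathcal{I}}$, and the restriction of coefficients $\rho\colon\mathcal{C}^\bullet_\R\to\mathcal{C}^\bullet_\mathcal{I}$, $f\otimes a\mapsto f|_\mathcal{I}\otimes a$. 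The square commutes because the Taylor operators $T^k$ of \eqref{eq: def_k_th_taylor} only involve derivatives of the coefficients at points $(t,0)$ with $t\in\mathcal{I}$, and these are local. By Lemma~\ref{lemma: product_cohomology}, both horizontal maps are quasi-isomorphisms. So it suffices to see that $\rho$ is a quasi-isomorphism.

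For this, pick $t_0\in\mathcal{I}$ and use Proposition~\ref{prop:CIcoh} for both intervals $\R$ and $\mathcal{I}$. Then $H(\rho)$ becomes, under the isomorphisms \eqref{eq:C_I}, the map sending $([f_1(t_0)a_1],[f_2(t_0)a_2])$ to the same element. It is therefore the identity of $\frac{\mathbb{W}^{\bullet-1}}{E\wedge\mathbb{W}^{\bullet-2}}\oplus\frac{\mathbb{W}^\bullet}{E\wedge\mathbb{W}^{\bullet-1}}$, hence an isomorphism. Consequently $H(r)$ is an isomorphism.

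For the second claim, let $U$ be any open set meeting the $t$-axis. I would choose a point $(t_0,0)\in U$ and a small product neighborhood $V=\mathcal{I}\times B\subset U$, with $\mathcal{I}$ an interval around $t_0$ and $B$ a ball centered at $0$. The restriction $\mathfrak{X}^\bullet(\R^n)\to\mathfrak{X}^\bullet(V)$ factors through $\mathfrak{X}^\bullet(U)$. By the first part, the composite $\mathfrak{X}^\bullet(\R^n)\to\mathfrak{X}^\bullet(V)$ induces an isomorphism in cohomology. Hence the first map $H(\mathfrak{X}^\bullet(\R^n))\to H(\mathfrak{X}^\bullet(U))$ is injective, which is quasi-injectivity.

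The only point needing care is the commutativity $\rho\circ\mathcal{T}_{\R^n}=\mathcal{T}_U\circ r$ and the compatibility of the isomorphisms of Proposition~\ref{prop:CIcoh} with $\rho$, which holds because the same base point $t_0\in\mathcal{I}\subset\R$ is used. Everything else is formal.
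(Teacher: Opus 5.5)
Your proposal is correct and follows essentially the same route as the paper: the commutative square of restrictions and Taylor maps $\mathcal{T}$, quasi-isomorphism of the $\mathcal{T}$ maps by Lemma~\ref{lemma: product_cohomology}, quasi-isomorphism of $\mathcal{C}^\bullet_\R\to\mathcal{C}^\bullet_\mathcal{I}$ via the isomorphisms \eqref{eq:C_I} taken at a common base point $t_0\in\mathcal{I}$, and quasi-injectivity obtained by factoring through a small product neighborhood $\mathcal{I}\times U_0\subset U$ of a point on the $t$-axis. The details you single out (locality of $T^k$ at the points $(t,0)$, and using the same $t_0$ for both intervals) are exactly what make the paper's diagrams commute.
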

\begin{proof}   
    We have the following commutative diagram of complexes,
    $$
    \xymatrix@R+0pc@C+1pc{
        \mathfrak{X}^\bullet(\R^n)\ar[d] \ar[r]& \mathfrak{X}^\bullet(\mathcal{I}\times U_0) \ar[d] \\
        \mathcal{C}^\bullet_\R \ar[r] & \mathcal{C}^\bullet_\mathcal{I},
    }
    $$
    where the horizontal maps are restrictions and the vertical maps are the maps $\mathcal{T}$ defined in \eqref{eq:Tmap}.
    By Lemma \ref{lemma: product_cohomology} both vertical maps are quasi-isomorphisms. Note that the same holds for the lower horizontal map: indeed, the restriction map $\mathcal{C}_\R^\bullet \to \mathcal{C}_\mathcal{I}^\bullet$ induces a map in cohomology fitting into the following commutative diagram:
\begin{equation}\label{eq:restquasi}
    \xymatrix@C=0in@R=.1in{
     H^\bullet(\mathcal{C}_\R)\ar[rr]\ar[ddr]& &H^\bullet(\mathcal{C}_{\mathcal{I}})\ar[ddl]\\
     &&\\
     & \frac{\mathbb{W}^{\bullet-1}}{E\wedge\mathbb{W}^{\bullet-2}}\oplus \frac{\mathbb{W}^\bullet}{E\wedge\mathbb{W}^{\bullet-1}},  &
    }
\end{equation}
where the diagonal maps are the isomorphisms \eqref{eq:C_I}.

For the second part, if $U\cap (\R\times\{0\})\neq\emptyset$, then we can choose an open set $\mathcal{I}\times U_{0}\subset U$, where $U_0$ is star-shaped with respect to $0\in\R^{n-1}$. We have a commutative diagram of complexes given by restriction maps:
    $$
    \xymatrix{
    \mathfrak{X}^\bullet(\R^n)\ar[r] \ar[rd] & \mathfrak{X}^\bullet(U) \ar[d]\\
     & \mathfrak{X}^\bullet(\mathcal{I}\times U_0)
    }
    $$
    The fact that $\mathfrak{X}^\bullet(\R^n)\to\mathfrak{X}^\bullet(U)$ is quasi-injective follows from  $\mathfrak{X}^\bullet(\R^n)\to \mathfrak{X}^\bullet(\mathcal{I}\times U_0)$ being a quasi-isomorphism. 
    \end{proof}

\begin{proof}[Proof of Prop.~ \ref{prop:quasiisom}]
    Denote by $U_0\subset\R^{n-1}$ the image of the projection $p\colon U\to\R^{n-1}$, which is star-shaped with respect to $0$. Then $U\subset\R\times U_0$ and we have the following commuting diagram of complexes
    $$
    \begin{tikzcd}
        \mathfrak{X}^\bullet(\R\times U_0) \arrow{r}{r} \arrow[swap]{d}{\mathcal{T}} & \mathfrak{X}^\bullet(U) \arrow{d}{\mathcal{T}_U} \\%
        \mathcal{C}^\bullet_\R \arrow{r}{}& \mathcal{C}^\bullet_{\mathcal{I}},
    \end{tikzcd}
    $$
 with horizontal maps given by restrictions, and vertical maps as in \eqref{eq:Tmap}.
Since $\mathcal{T}$ and the lower horizontal map are quasi-isomorphisms (see \eqref{eq:restquasi}), to prove that $\mathcal{T}_U$ is also a quasi-isomorphism it suffices to verify that the upper horizontal map $r$ is a quasi-isomorphism. By Lemma \ref{lem:homogeneous_classes}, the restriction $\mathfrak{X}^\bullet(\R^n)\to \mathfrak{X}^\bullet(\R\times U_0)$ is a quasi-isomorphism and the composition
$$
\mathfrak{X}^\bullet(\R^n)\to \mathfrak{X}^\bullet(\R\times U_0) \stackrel{r}{\to} \mathfrak{X}^\bullet(U)
$$
is quasi-injective, so $r$ must be quasi-injective. We now verify that $r$ is quasi-surjective. 

We will repeatedly use the following two facts:
\begin{itemize}
\item[(a)]  Any $\xi \in \mathcal{W}^k(U)$ can be written as $\xi=\partial_t\zeta$, for some $\zeta \in \mathcal{W}^k(U)$;
\item[(b)] If $\xi \in \mathcal{W}^k(U)$ satisfies $\partial_t\xi=0$, then $\xi$ extends to $\mathcal{W}^k(\R\times U_0)$.
\end{itemize}
Note that it suffices to verify these claims for smooth functions. The observation in (a) is a consequence of the fact that the projection $p\colon U\to\R^{n-1}$ admits a smooth global section (recall that $p$-fibers are intervals), which can be used as initial points for integration along the $t$-coordinate. The claim in (b) is also a direct consequence of the connectivity of $p$-fibers.
    
Now fix a class in $H^k_{book}(U)$ represented by 
$$
\mu = \partial_t\wedge a + b,
$$
with $a\in \mathcal{W}^{k-1}(U)$ and $b\in \mathcal{W}^k(U)$,
as in \eqref{eq:mudecomp}. 
We must find $\widetilde{a}\in \mathcal{W}^{k-1}(\R \times U_0)$, 
$\widetilde{b}\in \mathcal{W}^{k}(\R \times U_0)$ such that
$$
\partial_t \wedge a + b = r (\partial_t 
\wedge\widetilde{a} + \widetilde{b}) + d_{book}\zeta,
$$
for some $\zeta \in \mathfrak{X}^k(U)$.

Since $d_{book}\mu=0$, by Lemma~\ref{lem:PDE_cocyle} we have
    \begin{eqnarray}
        E\wedge\partial_ta &= & kb-E(b), \label{eq: d_book_mu=01}\\
        E\wedge \partial_tb & = & 0. \label{eq: d_book_mu=02}
    \end{eqnarray}
We claim that there exists $\xi\in\mathcal{W}^{k-1}(U)$ such that 
$$
\partial_tb = E\wedge \partial_t\xi.
$$
If $k\neq n-1$, by \eqref{eq: d_book_mu=02}, the claim follows from  Lemma \ref{lem:timeDependent_flatness} and the observation in (a) above.
For $k=n-1$, the claim also follows from Lemma \ref{lem:timeDependent_flatness} since, by 
 applying $\partial_t$ to both sides of the first identity in \eqref{eq: d_book_mu=01} and evaluating at $(t,0)\in U$, we see that $\partial_tb|_{(t,0)}=0$  (recall that $n\geq 2$).

 For $b_0=b-E\wedge\xi$ we have that $\partial_tb_0=0$, and therefore $b_0 = r(\widetilde{b_0})$ for some $\widetilde{b_0}\in \mathcal{W}^k(\R\times U_0)$ (property (b) above). Hence we can write
 $$
b = r(\widetilde{b_0}) + E\wedge\xi.
 $$
 
 Using the identity $b=b_0+E\wedge\xi$ in  \eqref{eq: d_book_mu=01} gives
    \begin{eqnarray*}
        E\wedge\partial_t a &=& kb_0-E(b_0) + kE\wedge\xi - E(E\wedge\xi) \\
        &=& kb_0-E(b_0) + kE\wedge\xi - E(E)\wedge\xi - E\wedge E(\xi) \\
        &=& kb_0-E(b_0) + E\wedge \big( (k-1)\xi - E(\xi) \big), 
    \end{eqnarray*}
    which can be rewritten as 
    $$kb_0-E(b_0) = E\wedge c$$
    for $c:= \partial_ta +\big(E-(k-1)\mathrm{Id}\big)\xi\in \mathcal{W}^{k-1}(U)$. Since $b_0$ is independent of $t$, it follows from the previous formula that 
    $$
    E\wedge\partial_tc=0.
    $$

We will repeat the previous arguments used to treat \eqref{eq: d_book_mu=02}  with Lemma~\ref{lem:timeDependent_flatness} and properties (a) and (b). For $(k-1) \neq (n-1)$, there exists $\gamma \in\mathcal{W}^{k-2}(U)$ such that 
    $\partial_tc =E\wedge\partial_t\gamma$.
    Consider $c_0 = c-E\wedge\gamma$. Then $\partial_tc_0=0$, and therefore $c_0 = r(\widetilde{c_0})$, for some $\widetilde{c_0}\in\mathcal{W}^{k-1}(\R\times U_0)$. By  the definition of $c$ we have
    $$
    \partial_ta = c_0 + E\wedge\gamma - (E-(k-1)\mathrm{Id})\xi.
    $$

    Choose $\widetilde{\gamma}\in\mathcal{W}^{k-2}(U)$ and $\widetilde{\xi}\in\mathcal{W}^{k-1}(U)$ such that $\partial_t\widetilde{\gamma}=\gamma$ and $\partial_t\widetilde{\xi}=\xi$, and define 
    $$
    a_0 = a - tc_0 -E\wedge\widetilde{\gamma} +(E-(k-1)\mathrm{Id})\widetilde{\xi}.
    $$
    Then $\partial_ta_0=0$, and therefore $a_0=r(\widetilde{a_0})$ for some $\widetilde{a_0}\in\mathcal{W}^{k-1}(\R\times U_0)$. Hence we can write
$$
a = r(\widetilde{a_0} + t\widetilde{c_0}) + E\wedge\widetilde{\gamma} - (E-(k-1)\mathrm{Id})\widetilde{\xi}
$$

    Let $\zeta = \partial_t\wedge\widetilde{\widetilde{\gamma}} - \widetilde{\xi}\in\mathfrak{X}^{k-1}(U)$, where $\widetilde{\widetilde{\gamma}}\in \mathcal{W}^{k-2}(U)$ satisfies  
    $\partial_t\widetilde{\widetilde{\gamma}} = \widetilde{\gamma}$.  Then (by Lemma~\ref{lem:PDE_cocyle})
    \begin{eqnarray*}
        d_{book}(\zeta) &=& \partial_t \wedge \left( E\wedge\partial_t\widetilde{\widetilde{\gamma}} - (E-(k-1)\mathrm{Id})\widetilde{\xi} \right) + E\wedge\partial_t\widetilde{\xi} \\ 
        &=& \partial_t\wedge\left(E\wedge\widetilde{\gamma} -(E-(k-1)\mathrm{Id})\widetilde{\xi}\right) + E\wedge\xi\\
        &=& \partial_t\wedge \big( a - a_0 - tc_0 \big) + b - b_0 \\
        &=& \mu - \left( \partial_t\wedge(a_0+tc_0) +b_0 \right) \\
        &=& \mu - r\left( \partial_t\wedge(\widetilde{a_0}+t\widetilde{c_0}) +\widetilde{b_0} \right),
    \end{eqnarray*}
    proving that $[\mu]$ is in the image of the restriction map $H^k_{book}(\R\times U_0)\to H^k_{book}(U)$. So $r$ is quasi-surjective for $k\neq n$.

When $k=n$, we claim that $H^n_{book}(U)=0$, so the surjectivity of $r$ in cohomology is immediate. To check the vanishing of this cohomology, 
consider $\mu\in\mathfrak{X}^n(U)$, which has the form 
$$
\mu=f\partial_t\wedge\partial_1\wedge\cdots\wedge\partial_{n-1}
$$ 
for some smooth function $f\in C^\infty(U)$; here and below we use the shorthand notation $\partial_i = \partial_{u_i}$. By property (a) above there exist $a_i\in C^\infty(U)$, for $i=1,\ldots,n-1$, such that $\partial_ta_i=\partial_if$. Consider $\eta = \partial_t\wedge \alpha + \beta\in\mathfrak{X}^{n-1}(U)$, where 
    $$\alpha = \sum_{i=1}^{n-1}(-1)^{i+1}a_i\widehat{\partial_i} \ \ \mbox{ and }\ \ \beta = -f\partial_1\wedge\cdots\wedge\partial_{n-1}.$$
    Here $\widehat{\partial_i}$ denotes the ordered wedge product of $\partial_1,\ldots,\partial_{n-1}$ that omits the term $\partial_i$.
    Then
    \begin{eqnarray*}
        d_{book}\eta &=& \partial_t\wedge\left( E\wedge\partial_t\alpha +E(\beta) - (n-1)\beta\right) - \underbrace{E\wedge\partial_t\beta}_{=0}\\
        &=& \partial_t\wedge\left( E\wedge\sum_{i=1}^{n-1}(-1)^{i+1}\partial_if\ \widehat{\partial_i} + \left( (n-1)f - \sum_{i=1}^{n-1}u_i\partial_if  \right)\partial_1\wedge\cdots\wedge\partial_{n-1} \right) \\
        &=& \partial_t\wedge\left( \sum_{j=1}^{n-1}\sum_{i=1}^{n-1}(-1)^{i+1}u_j\partial_if\ \partial_j\wedge\widehat{\partial_i} + \left( (n-1)f - \sum_{i=1}^{n-1}u_i\partial_if  \right)\partial_1\wedge\cdots\wedge\partial_{n-1} \right)\\
        &=& (n-1)\mu,
    \end{eqnarray*}
    where we use that $\partial_j\wedge\widehat{\partial_i} = (-1)^{i+1}\partial_1\wedge\cdots\wedge\partial_{n-1}$ for $j=i$, otherwise it vanishes.
   It follows that $\mu = d_{book}\big(\eta/(n-1)\big)$ is exact, and the proof of the quasi-surjectivitity of $r$ is complete.

\end{proof}

From Propositions~\ref{prop:CIcoh} and \ref{prop:quasiisom}, we conclude

\begin{theorem}\label{thm:poissoncohU}
Let $U\subset\R^n=\R\times\R^{n-1}$ be an open set such that the projection $p\colon U\to\R^{n-1}$, $(t,u)\mapsto u$ has connected fibers and its image $p(U)$ is star-shaped with respect to $0\in\R^{n-1}$. Then, for any $t_0$ such that $(t_0,0)\in U$ and $k\geq 0$, the map
$$
H^k_{book}(U)\to \frac{\mathbb{W}^{k-1}}{E\wedge \mathbb{W}^{k-2}}\oplus \frac{\mathbb{W}^k}{E\wedge \mathbb{W}^{k-1}},\quad [\partial_t a + b]\mapsto [T^{k-1}a |_{t=t_0}]\oplus [T^k b |_{t=t_0}]
$$
is well defined and an isomorphism, with inverse map given by $[a]\oplus[b]\mapsto [\partial_t\wedge a  + b]$.
\end{theorem}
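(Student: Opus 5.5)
The plan is to compose the two isomorphisms already established. By Proposition~\ref{prop:quasiisom}, the map $\mathcal{T}\colon(\mathfrak{X}^\bullet(U),d_{book})\to(\mathcal{C}^\bullet_\I,\delta)$, $\mathcal{T}(\partial_t\wedge a+b)=T^{k-1}a\oplus T^kb$, with $\I=p^{-1}(0)$, is a quasi-isomorphism. Since $\mathcal{T}$ is a chain map, it sends $d_{book}$-cocycles to $\delta$-cocycles and coboundaries to coboundaries. Hence the induced map $H^k_{book}(U)\to H^k(\mathcal{C}_\I)$ is well defined and bijective. The hypothesis $(t_0,0)\in U$ means exactly $t_0\in\I$.

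Next I would compose with the isomorphism \eqref{eq:C_I} of Proposition~\ref{prop:CIcoh}, which evaluates at $t_0$: $[(\alpha,\beta)]\mapsto([\alpha|_{t=t_0}],[\beta|_{t=t_0}])$. Proposition~\ref{prop:CIcoh} is stated on elementary tensors, but it extends linearly to all of $C^\infty(\I)\otimes\mathbb{W}^\bullet$, since $\mathbb{W}^\bullet$ is finite-dimensional. The composite is then exactly $[\partial_t\wedge a+b]\mapsto[T^{k-1}a|_{t=t_0}]\oplus[T^kb|_{t=t_0}]$. As a composition of two isomorphisms it is well defined and bijective, for every $k\geq 0$, with the conventions $\mathbb{W}^{-1}=\mathbb{W}^{-2}=0$.

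For the inverse, it suffices to check that $\psi\colon([a],[b])\mapsto[\partial_t\wedge a+b]$ is a right inverse. Here $a\in\mathbb{W}^{k-1}$ and $b\in\mathbb{W}^k$ are regarded as $t$-independent multivector fields on $U$. Three checks are needed.
\begin{enumerate}
\item[(i)] $\partial_t\wedge a+b$ is $d_{book}$-closed. By Lemma~\ref{lem:PDE_cocyle}, the terms $\partial_t a$ and $\partial_t b$ vanish. Also $E(b)=kb$, because the coefficients of $b$ are homogeneous of degree $k$.
\item[(ii)] $\psi$ is well defined. Write $a=E\wedge a'$ with $a'\in\mathbb{W}^{k-2}$. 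Then $\partial_t\wedge a=d_{book}(\partial_t\wedge ta')$, again by Lemma~\ref{lem:PDE_cocyle}, using $E(a')=(k-2)a'$ so that the $E(b)-kb$ term works out. Similarly, if $b=E\wedge b'$, then $b$ equals $d_{book}$ of an explicit element built from $tb'$, up to sign and a $\partial_t\wedge$ correction. One can see this directly, or note that $\iota$ from Lemma~\ref{lemma: product_cohomology} is a chain map and $E\wedge b'=\pm\delta(tb')$ in $\mathcal{C}_\I$.
\item[(iii)] The forward map applied to $\psi([a],[b])$ returns $([a],[b])$. Taylor operators fix homogeneous polynomial coefficients of the right degree, so $T^{k-1}a=a$ and $T^kb=b$.
\end{enumerate}
Since the forward map is bijective, a right inverse is the inverse.

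The main obstacle is only the bookkeeping in step (ii): well-definedness of the inverse map on $U$ itself rather than on $\I$. To handle it, I would argue directly that $\mathcal{T}\circ\psi$ equals the inverse of \eqref{eq:C_I} composed with $1\otimes\mathrm{id}$, and then invoke injectivity of $\mathcal{T}$ in cohomology. This sidesteps constructing explicit primitives.
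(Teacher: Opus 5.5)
Your proposal is correct and follows the paper's route: the paper likewise obtains the theorem by composing the quasi-isomorphism $\mathcal{T}$ of Proposition~\ref{prop:quasiisom} with the evaluation isomorphism \eqref{eq:C_I} of Proposition~\ref{prop:CIcoh}, and your explicit check of the inverse (closedness, well-definedness, and $T^{k-1}a=a$, $T^kb=b$) fills in a step the paper leaves implicit. One small correction to (ii): homogeneity is not needed for $d_{book}(\partial_t\wedge ta')=\partial_t\wedge E\wedge a'$, since that element has no $b$-component; it is needed in the other case, where $d_{book}(-tb')=E\wedge b'$ holds exactly because $E(b')=(k-1)b'$, with no $\partial_t\wedge$ correction.
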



Theorem~\ref{thm:main1} is a consequence of the previous theorem. Indeed, we have the following commutative diagram:
$$
\xymatrix{
 \wedge^\bullet \mathrm{span}_\R\{\partial_t, u_i\partial_{u_j}\} \ar[r]\ar[d]^\cong
& H^\bullet_{book}(U)\ar[d]^\cong\\
 \mathbb{W}^{\bullet-1}\oplus \mathbb{W}^\bullet\ar[r]& \frac{\mathbb{W}^{\bullet-1}}{E\wedge \mathbb{W}^{\bullet-2}}\oplus \frac{\mathbb{W}^\bullet}{E\wedge \mathbb{W}^{\bullet-1}},
}
$$
where the upper horizontal map is the natural ring homomorphism that maps a cocycle to its cohomology class, the lower horizontal map is the natural projection,
the left vertical isomorphism is given by
$$
\mathbb{W}^{k-1}\oplus \mathbb{W}^k \to \wedge^k \mathrm{span}_\R\{\partial_t,u_i\partial_{u_j}\}, \quad (a,b)\mapsto \partial_t\wedge a + b,
$$
and the right vertical isomorphism is the one of Theorem~\ref{thm:poissoncohU}.
Under the vertical isomorphism on the left, the kernel of the lower horizontal map corresponds to the ideal generated by $E$.

\subsection{Dimensions}\label{subsec:dimensions}

\begin{proposition} 
The wedge product $E\wedge-$ induces an exact sequence
\begin{equation}\label{eq:homogeneousExactness10}
    0\to\mathbb{W}^0\to\mathbb{W}^1\to\cdots\to\mathbb{W}^k\to\cdots\to\mathbb{W}^{n-1}\to 0.
\end{equation}
\end{proposition}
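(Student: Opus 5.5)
The plan is to deduce exactness from Lemma~\ref{lem:timeDependent_flatness}, using the Taylor-type operators $T^k$ to return from smooth to homogeneous polynomial coefficients. Regard $\mathbb{W}^k\subset\mathcal{W}^k(U)$ for $U=\R^n$; the elements of $\mathbb{W}^k$ are exactly the $t$-independent elements of $\mathcal{W}^k(U)$ whose coefficients are homogeneous polynomials of degree $k$ in $u$. Note first that $E\wedge-$ sends $\mathbb{W}^{k-1}$ into $\mathbb{W}^k$, since $E\in\mathbb{W}^1$, and that $E\wedge E=0$. So \eqref{eq:homogeneousExactness10} is a complex, and it only remains to show $\ker=\im$ at each $\mathbb{W}^k$.

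Take $\mu\in\mathbb{W}^k$ with $E\wedge\mu=0$. Then $\mu$ is a cocycle in $(\mathcal{W}^\bullet(\R^n),E\wedge-)$. For $k\neq n-1$, Lemma~\ref{lem:timeDependent_flatness} gives $\mu=E\wedge\xi$ for some $\xi\in\mathcal{W}^{k-1}(\R^n)$. For $k=n-1$, every element of $\mathbb{W}^{n-1}$ is a cocycle. Its class corresponds to $f(t,0)$, and this vanishes because the coefficient is homogeneous of degree $n-1\geq 1$. Hence again $\mu=E\wedge\xi$.

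This includes the case $k=0$. There $\mu$ is a constant $c$ with $cE=0$, which forces $c=0$, so $\mathbb{W}^0\to\mathbb{W}^1$ is injective. This is consistent with the vanishing of $H^0$ in the lemma when $n\geq 2$.

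Next, apply $T^k$ to the identity $\mu=E\wedge\xi$. For $\mu\in\mathbb{W}^k$ we have $T^k\mu=\mu$, since the degree-$k$ Taylor part of a homogeneous polynomial of degree $k$ is itself. By Lemma~\ref{lem:properties}(b), $T^k(E\wedge\xi)=E\wedge T^{k-1}\xi$. Therefore $\mu=E\wedge T^{k-1}\xi$ with $T^{k-1}\xi\in C^\infty(\R)\otimes\mathbb{W}^{k-1}$.

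To remove the remaining $t$-dependence, evaluate at $t=0$. Since $\mu$ is independent of $t$, we get $\mu=E\wedge (T^{k-1}\xi)|_{t=0}$, and $(T^{k-1}\xi)|_{t=0}\in\mathbb{W}^{k-1}$. This is the same maneuver as in the proof of Proposition~\ref{prop:CIcoh}.

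Surjectivity at the last spot $\mathbb{W}^{n-1}\to 0$ is just the exactness statement at degree $n-1$: every element of $\mathbb{W}^{n-1}$ is closed, and the argument above shows it lies in $E\wedge\mathbb{W}^{n-2}$. This also matches the already-noted formula writing $u_I\partial_{u_1}\wedge\cdots\wedge\partial_{u_{n-1}}$ as $\pm E\wedge u_{I\setminus i}\,\widehat{\partial_i}$, which gives a direct check.

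The only delicate point I expect is the degree-$(n-1)$ case of Lemma~\ref{lem:timeDependent_flatness} together with the compatibility $T^k(E\wedge\xi)=E\wedge T^{k-1}\xi$. I would check the latter directly: $E$ has linear coefficients, so the degree-$k$ Taylor part of $u_j\xi_J$ is $u_j$ times the degree-$(k-1)$ part of $\xi_J$. As a sanity check, the alternating sum of the dimensions \eqref{eq:dimWk} should vanish.
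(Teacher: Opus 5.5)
Your proposal is correct and follows essentially the same route as the paper. You view $\mathbb{W}^k\subset\mathcal{W}^k(\R^n)$ and invoke Lemma~\ref{lem:timeDependent_flatness}, using the vanishing along $u=0$ when $k=n-1$, to write $\mu=E\wedge\xi$; you then apply $T^k$ with Lemma~\ref{lem:properties}(b) and evaluate at a fixed time to conclude $\mu\in E\wedge\mathbb{W}^{k-1}$.
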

\begin{proof}
For each $k=0,1,\ldots,n-1$, we have to verify the exactness of
$$
    \mathbb{W}^{k-1}\stackrel{E\wedge-}{\longrightarrow} \mathbb{W}^k\stackrel{E\wedge-}{\longrightarrow} \mathbb{W}^{k+1}.
$$
Given $a\in\mathbb{W}^k$ satisfying $E\wedge a=0$, we need to find  $b\in\mathbb{W}^{k-1}$ such that $a=E\wedge b$. We regard $\mathbb{W}^k\subset\mathcal{W}^k(\R^n)$. Since $E\wedge a = 0$ and $a|_{(t,0)}=0$ for every $t\in\R$ when $k=n-1\geq 1$, it follows from Lemma~\ref{lem:timeDependent_flatness} that there exists some $\mu\in\mathcal{W}^{k-1}(\R^n)$ such that 
$$
    a = E\wedge\mu.
$$
Fix $t_0\in\R$ and let $b=T^{k-1}\mu|_{t=t_0} \in\mathbb{W}^{k-1}$. By applying the Taylor operator $T^k$ and evaluating at $t=t_0$ on both sides of the previous equation, we conclude that $a=E\wedge b$ (see Lemma~\ref{lem:properties} (b)).
\end{proof}

By the exactness of \eqref{eq:homogeneousExactness10}, we have that the map $E\wedge -$ induces an isomorphism $\frac{\mathbb{W}^{k-1}}{E\wedge \mathbb{W}^{k-2}}\cong E\wedge \mathbb{W}^{k-1}$, hence
    $$
\frac{\mathbb{W}^{k-1}}{E\wedge \mathbb{W}^{k-2}}\oplus \frac{\mathbb{W}^k}{E\wedge \mathbb{W}^{k-1}} \cong E\wedge \mathbb{W}^{k-1}\oplus \frac{\mathbb{W}^k}{E\wedge \mathbb{W}^{k-1}}.
$$
Therefore each choice of complement of $E\wedge \mathbb{W}^{k-1}$ in $\mathbb{W}^k$ induces an isomorphism
$$
H^k_{book}(U)\cong \mathbb{W}^k.
$$

In the next subsection we will explain how to explicitly find such a complement.

It follows from \eqref{eq:dimWk} that
\begin{equation}\label{eq:dimHk}
\dim H^k_{book}(U)=\binom{n-1}{k}\binom{n+k-2}{k}.
\end{equation}


\subsection{Basis for the book cohomology}
The main goal of this subsection is to prove Theorem~\ref{thm:main2}.
We will formulate the key results in a slightly more general framework that will be useful in  \cite{BL}.

For integers $k,l\geq 0$, we let $\mathbb{W}^{k,l}$ denote the set of $k$-multi-vectors whose coordinate functions are homogeneous polynomials of degree $l$ in $\R^{n-1}$ (with coordinates $(u_1,\ldots,u_{n-1})$), i.e.,
\begin{equation}\label{def: homogeneous_multivectors}
    \mathbb{W}^{k,l} = \mathrm{span}_\R\left\{ u_I\partial_J \ : \ |I|=l, \ |J|=k \ \right\}
\end{equation}
For $l<0$ we define $\mathbb{W}^{k,l}=0$. Note that $\mathbb{W}^{k,k}=\mathbb{W}^k$ defined in $\S$ \ref{sec:proofs}.

Denote the canonical basis of $\mathbb{W}^{k,l}$ by 
$$
    \mathcal{B}^{k,l} = \left\{ u_I\partial_J : |I|=l,\ |J|=k \right\},
$$
and consider the  subset
$$
\mathcal{B}^{k,l}_{adm} = \{ u_I\partial_J \in \mathcal{B}^{k,l}\,|\, (I,J) \, \mbox{admissible}\}.
$$ 
We will refer to  $u_I\partial_J \in \mathbb{W}^{k,l}$ as admissible if $(I,J)$ is  admissible, i.e. if $\max I\leq \max J^c$ (see $\S$ \ref{sec:mainresults}). 

We will make use of the following notation for multi-indices: 

\begin{itemize}
    \item Given two multi-indices $I_1$ and $I_2$ we define their union $I_1\cup I_2$ as the unique multi-index for which
    $$
        u_{I_1\cup I_2} = u_{I_1} u_{I_2}.
    $$
    So the multiplicities of common elements in $I_1$ and $I_2$ are added to form their multiplicity in $I_1\cup I_2$. 
    
    \item If $I_1$ and $I_2$ have a common element, then $\partial_{I_1\cup I_2}=0$. If $J_1$ and $J_2$ are increasing and disjoint there is a unique sign, denoted by $\mathrm{sgn}(J_1,J_2)\in\{-1,1\}$, such that
    $$
        \partial_{J_1}\wedge\partial_{J_2}= \mathrm{sng}(J_1,J_2)\partial_{J_1\cup J_2}.
    $$
    \end{itemize}

The following result proves that the set of admissible elements in $\mathcal{B}^{k,l}$ is in bijection with the set of non-admissible ones in $\mathcal{B}^{k+1,l+1}$.

\begin{lemma}\label{lemma: good_bad}
    The following statements hold.
    \begin{enumerate}
        \item Let $u_I\partial_J$ be admissible and $m\in J^c$. Then $u_{I\cup m}\partial_{J\cup m}$ is non-admissible if and only if $m = \max J^c$.
        
        \item Let $u_I\partial_J$ be non-admissible and $m\in I\cap J$. Then $u_{I\setminus m}\partial_{J\setminus m}$ is admissible if and only if $m=\max (I\cap J)$.
        
        \item Suppose that $(k,l)\neq(n-1,0)$.
        If $\mathcal{B}^{k,l}_{adm}\neq\emptyset$, then the map 
        $$
        u_I\partial_J \mapsto  u_{I\cup m}\partial_{J\cup m}, \ \mbox{ where } \ m=\max J^c,
        $$
        is a bijection from $\mathcal{B}^{k,l}_{adm}$ to $(\mathcal{B}^{k+1,l+1} \setminus \mathcal{B}^{k+1,l+1}_{adm})$, whose inverse is given by
        $$u_I\partial_J\mapsto u_{I\setminus m}\partial_{J\setminus m}, \ \mbox{ where } \ m=\max(I\cap J).$$
        Furthermore, $\mathcal{B}^{k,l}_{adm}$ is empty if and only if so is $(\mathcal{B}^{k+1,l+1} \setminus \mathcal{B}^{k+1,l+1}_{adm})$. 
    \end{enumerate}
\end{lemma}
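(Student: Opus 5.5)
The whole lemma is combinatorics of multi-indices, and I will argue directly from the definition: $(I,J)$ is admissible iff $\max I\le \max J^c$. Here $J^c$ is the complement of $J$ in $\{1,\dots,n-1\}$, and $\max\emptyset=-\infty$. Throughout I will use the description through the ``top block'' $\{r,\dots,n-1\}\subseteq J$ from the definition.

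\emph{Item (1).} Let $u_I\partial_J$ be admissible and $m\in J^c$. Then $(J\cup m)^c=J^c\setminus\{m\}$ and $\max(I\cup m)=\max(\max I,m)$.
\begin{itemize}
\item If $m\neq\max J^c$, then $\max(J\cup m)^c=\max J^c$. Both $\max I$ and $m$ are at most this value, so the new pair is admissible.
\item If $m=\max J^c$, then every element of $(J\cup m)^c$ is strictly less than $m$. Since $m\in I\cup m$, we get $\max(I\cup m)\ge m>\max(J\cup m)^c$, so the new pair is non-admissible.
\end{itemize}

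\emph{Item (2).} Let $u_I\partial_J$ be non-admissible and $m\in I\cap J$. Non-admissibility means $\max I>\max J^c=:c$. Then $(J\setminus m)^c=J^c\cup\{m\}$, so $\max(J\setminus m)^c=\max(c,m)$.
\begin{itemize}
\item If $m=\max(I\cap J)$: every element of $I$ larger than $c$ lies in $J$, hence in $I\cap J$, hence is $\le m$. So $\max I\le m$, and therefore $\max(I\setminus m)\le \max I\le m$. This gives admissibility.
\item If $m<\max(I\cap J)=:m'$: then $m'\in I\setminus m$. Also $m'>m$, and $m'\in J$ with $m'\ge\max I>c$ (indeed $m'$ is the largest element of $I$ exceeding $c$). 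Hence $\max(I\setminus m)\ge m'>\max(c,m)$, which is non-admissible.
\end{itemize}
The step needing care is the claim $\max I=\max(I\cap J)$ for non-admissible pairs: elements of $I$ above $\max J^c$ must lie in $J$. This is also what guarantees $I\cap J\neq\emptyset$, so that the inverse map in item (3) is defined.

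\emph{Item (3).} First I check that the forward map is defined. Let $u_I\partial_J$ be admissible. If $J^c=\emptyset$, then $k=n-1$, and admissibility forces $I=\emptyset$, i.e.\ $l=0$; this is the excluded case $(k,l)=(n-1,0)$. So $m=\max J^c$ exists, and by (1) the image is non-admissible.

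Next I show the two maps are mutually inverse.
\begin{itemize}
\item Starting from an admissible $u_I\partial_J$ with $m=\max J^c$: we have $m\in(I\cup m)\cap(J\cup m)$. Any element of $I\cap J$ is at most $\max I\le \max J^c=m$. Hence $\max((I\cup m)\cap(J\cup m))=m$, and removing $m$ recovers $(I,J)$.
\item Starting from a non-admissible $u_I\partial_J$ with $m=\max(I\cap J)$: by (2) the image is admissible. The proof of (2) shows $m\ge\max J^c$. Since $m\notin J^c$, in fact $m>\max J^c$, so $\max(J\setminus m)^c=m$. Adding $m$ back recovers $(I,J)$.
\end{itemize}
For non-admissible pairs the inverse map is always defined, since $I\cap J\neq\emptyset$ as noted in (2). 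Therefore, whenever $\mathcal{B}^{k,l}_{adm}\neq\emptyset$, the two maps are inverse bijections.

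It remains to prove the emptiness statement, i.e.\ that $\mathcal{B}^{k,l}_{adm}=\emptyset$ exactly when $\mathcal{B}^{k+1,l+1}\setminus\mathcal{B}^{k+1,l+1}_{adm}=\emptyset$.
\begin{itemize}
\item If there is a non-admissible element in degree $(k+1,l+1)$, the inverse map produces an admissible element in degree $(k,l)$.
\item Conversely, if there is an admissible element in degree $(k,l)$, its image under the forward map is non-admissible, provided the forward map is defined. This holds because $(k,l)\neq(n-1,0)$, as shown above.
\end{itemize}

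I expect no serious obstacle. The only subtle points are the edge cases with empty index sets and the exclusion of $(k,l)=(n-1,0)$: there $\mathcal{B}^{n-1,0}_{adm}=\{\partial_1\wedge\cdots\wedge\partial_{n-1}\}$ is nonempty while $\mathcal{B}^{n,1}=\emptyset$.
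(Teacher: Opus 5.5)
Your proof is correct and follows essentially the same route as the paper. Items (1) and (2) are the same direct comparisons: $\max(I\cup m)$ against $\max(J^c\setminus\{m\})$, and $\max(I\setminus m)$ against $\max(J^c\cup\{m\})$, including the key observation that $\max I=\max(I\cap J)$ for non-admissible pairs. In item (3) you verify the two compositions by computing the relevant maxima directly, whereas the paper deduces them from the uniqueness of $m$ in (1) and (2); your treatment of the excluded case $(k,l)=(n-1,0)$ matches the paper's.
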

\begin{proof}
For (a), suppose that $u_I\partial_J$ is admissible and let $r=\max J^c +1$. Thus $\max I\leq r-1$. 

Given  $m\in J^c$ with $m<\max J^c=r-1$, we have
$$
\{r,r+1,\ldots,n-1\}\subset J\cup m \ \ \mbox{ and } \ \ r-1\notin J\cup m,
$$
so we have $\max (J\cup m)^c = r-1 (=\max J^c$). Thus 
$$
    \max(I\cup m)\leq r-1=\max(J\cup m)^c,
$$
proving that $(I\cup m, J\cup m)$ is admissible. 

Conversely, if $m=r-1$ then
$$
\{r-1,r,r+1,\ldots,n-1\} \subset J\cup m,
$$
so $\max (J\cup m)^c<r-1$ and $r-1=m\in I\cup m$. Thus 
$$
    \max (I\cup m)\geq r-1>\max (J\cup m)^c,
$$
showing that $(I\cup m, J\cup m)$ is not admissible.

For (b), let $u_I\partial_J$ be non-admissible and $m\in I\cap J$. First notice that, since $\max I>\max J^c$, we have $\max I\in J$ so $\max (I\cap J) = \max I$. Next,
$$
    \max (J\setminus m)^c = \max (J^c\cup m )= \left\{ \begin{array}{ll}
        \max J^c &\mbox{ if } m<\max J^c,  \\
        m & \mbox{ if } m >\max J^c.
    \end{array}\right.
$$
Let us analyze the two cases:
\begin{itemize}
    \item If $m\neq \max (I\cap J)$, then $m<\max I$, thus $\max (I\setminus m) = \max I$, which is strictly greater than both $\max J^c$ and $m$, therefore strictly greater than $\max(J\setminus m)^c$. It follows that $(I\setminus m, J\setminus m)$ is non-admissible.
    
    \item If $m=\max (I\cap J) =\max I >\max J^c$, then $\max(J\setminus m)^c = m =\max I\geq \max (I\setminus m)$. So $(I\setminus m, J\setminus m)$ is admissible. 
\end{itemize}

For (c) note that items (a) and (b) imply that
$$
    \mathcal{B}^{k,l}_{adm}\neq\emptyset \ \ \Leftrightarrow \ \ \mathcal{B}^{k+1,l+1}\setminus\mathcal{B}^{k+1,l+1}_{adm} \neq\emptyset.
$$
Indeed, item (b) gives ($\Leftarrow$). For ($\Rightarrow$), note that $k\neq n-1$. Otherwise, since $(k,l)\neq(n-1,0)$, we would have $k=n-1$ and $l>0$, thus $\mathcal{B}^{n-1,l}_{adm}=\emptyset$. Using that $k\neq n-1$, the result  follows from (a) because  $m=\max J^c$ is well defined for every $u_I\partial_J\in \mathbb{W}^{k,l}$. 

Suppose that $\mathcal{B}^{k,l}_{adm}\neq \emptyset$ and  let us prove the maps defined in the statement are inverses of one another. 
Given $u_I\partial_J\in \mathcal{B}^{k,l}_{adm}$,  by item (a) there is a unique $m$ such that $u_{I\cup m}\partial_{J\cup m}$ is non-admissible. On the other hand, 
for $u_I\partial_J\in \left(\mathcal{B}^{k+1,l+1}\setminus\mathcal{B}^{k+1,l+1}_{adm}\right)$, by item (b) there is a unique $m$ such that $u_{I\setminus m}\partial_{J\setminus m}$  is admissible.
The uniqueness of the elements being included or removed ensures that the maps in the statement are inverses of one another.

\end{proof}

\begin{lemma}\label{lemma: goodie-goodie}
    If $(k,l)\neq (n,1)$ then
    $$
    \mathcal{B}^{k,l}_{adm}\cup (E\wedge\mathcal{B}^{k-1,l-1}_{adm})
    $$
    is a basis for $\mathbb{W}^{k,l}$.
\end{lemma}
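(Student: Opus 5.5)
The plan is to show that the set $\mathcal{B}^{k,l}_{adm}\cup (E\wedge\mathcal{B}^{k-1,l-1}_{adm})$ is related to the canonical basis $\mathcal{B}^{k,l}$ by a unitriangular change of basis with respect to a suitable order. Lemma~\ref{lemma: good_bad}(c) gives the counting: $\mathcal{B}^{k,l}$ is the disjoint union of $\mathcal{B}^{k,l}_{adm}$ and the non-admissible elements, and the latter are in bijection with $\mathcal{B}^{k-1,l-1}_{adm}$ via $u_I\partial_J\mapsto u_{I\cup m}\partial_{J\cup m}$, $m=\max J^c$. This needs $(k-1,l-1)\neq(n-1,0)$, which is exactly the hypothesis $(k,l)\neq(n,1)$; the degenerate cases $l=0$ or $k=0$ are trivial. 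So the proposed set has exactly $\dim\mathbb{W}^{k,l}$ elements, once we check that the elements $E\wedge u_I\partial_J$ are distinct and nonzero. Hence it suffices to prove linear independence, or equivalently spanning.

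For $u_I\partial_J\in\mathcal{B}^{k-1,l-1}_{adm}$, expand
$$
E\wedge u_I\partial_J=\sum_{m\in J^c}\pm\, u_{I\cup m}\partial_{J\cup m}.
$$
Write $m_0=\max J^c$. By Lemma~\ref{lemma: good_bad}(a), the term with $m=m_0$ is the non-admissible element $\phi(u_I\partial_J)$ assigned by the bijection, and every other term, with $m<m_0$, is admissible. Therefore, modulo $\mathrm{span}\,\mathcal{B}^{k,l}_{adm}$, we have
$$
E\wedge u_I\partial_J\equiv \pm\,\phi(u_I\partial_J).
$$
Since $\phi$ is a bijection onto the non-admissible elements of $\mathcal{B}^{k,l}$, the images of the vectors $E\wedge\mathcal{B}^{k-1,l-1}_{adm}$ in the quotient $\mathbb{W}^{k,l}/\mathrm{span}\,\mathcal{B}^{k,l}_{adm}$ are, up to sign, exactly the images of the non-admissible basis vectors. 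Those images form a basis of the quotient.

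It follows that $\mathcal{B}^{k,l}_{adm}$ together with $E\wedge\mathcal{B}^{k-1,l-1}_{adm}$ spans $\mathbb{W}^{k,l}$. The cardinality count above then gives that it is a basis. Alternatively, one sees directly that the transition matrix is block unitriangular (identity on the admissible block, a signed permutation on the non-admissible block), which yields independence without counting.

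The main point to check carefully is that the terms with $m<\max J^c$ really are admissible. This is the first half of Lemma~\ref{lemma: good_bad}(a), which is already established. One must also confirm that the hypothesis $(k,l)\neq(n,1)$ is exactly what is needed to invoke Lemma~\ref{lemma: good_bad}(c) at level $(k-1,l-1)$, including the case where $\mathcal{B}^{k-1,l-1}_{adm}$ is empty. In that case the non-admissible part of $\mathcal{B}^{k,l}$ is also empty by the last clause of (c), so $\mathcal{B}^{k,l}_{adm}=\mathcal{B}^{k,l}$ and the claim is trivial.
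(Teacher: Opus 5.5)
Your proposal is correct and follows essentially the paper's argument. Both expand $E\wedge u_{I}\partial_{J}=\sum_{m\in J^c}\pm\, u_{I\cup m}\partial_{J\cup m}$, use Lemma~\ref{lemma: good_bad}(a) to see that only the $m=\max J^c$ term is non-admissible, and use the bijection of Lemma~\ref{lemma: good_bad}(c) (with the empty case handled by its last clause) to match $E\wedge\mathcal{B}^{k-1,l-1}_{adm}$, up to sign and modulo $\mathbb{W}^{k,l}_{adm}$, with the non-admissible elements of $\mathcal{B}^{k,l}$. The only difference is that you make the linear independence explicit, through the unitriangular structure or the cardinality count, whereas the paper stops after showing spanning and leaves that step implicit.
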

\begin{proof}

By hypothesis, $(k-1,l-1)\neq(n-1,0)$. In case $\mathcal{B}^{k-1,l-1}_{adm}=\emptyset$, by Lemma \ref{lemma: good_bad} (c), we have $\mathcal{B}^{k,l}=\mathcal{B}^{k,l}_{adm}$ and the result follows. 

When $\mathcal{B}^{k-1,l-1}_{adm}\neq\emptyset$, the proof consists in showing that a non-admissible element in $\mathcal{B}^{k,l}$ is, up to a sign, an element in $E\wedge\mathcal{B}^{k-1,l-1}_{adm}$ modulo an element in the span of $ \mathcal{B}^{k,l}_{adm}$. By Lemma \ref{lemma: good_bad}, for any non-admissible element $u_I\partial_J\in \mathbb{W}^{k,l}$, we have $u_{I\setminus m}\partial_{J\setminus m}$ admissible with $m=\max (I\cap J)$. Thus
    \begin{eqnarray}
        u_I\partial_J &=& u_{I'\cup m}\partial_{J'\cup m} = \mathrm{sgn}(m,J\setminus m) u_m\partial_m\wedge u_{I'}\partial_{J'}\nonumber \\ 
        &=& \mathrm{sgn}(m,J\setminus m)\left( E\wedge u_{I'}\partial_{J'} - \sum_{s\neq m} u_s\partial_s \wedge u_{I'}\partial_{J'} \right) \nonumber\\
        &=& \mathrm{sgn}(m,J\setminus m)\left(E\wedge u_{I'}\partial_{J'} - \sum_{s\notin J'\cup m} \mathrm{sgn}(s, J') \  u_{I'\cup s}\partial_{J'\cup s}\right), \nonumber 
        \end{eqnarray}
    where $I'=I\setminus m$ and $J'=J\setminus m$. By item (a) in Lemma \ref{lemma: good_bad}, $m$ is unique with the property that  $(I'\cup m,J'\cup m)$ is non-admissible. Therefore, for $s\notin J'\cup m$, we have $(I'\cup s, J'\cup s)$ admissible.
    
    \end{proof}

Let $\mathbb{W}^{k,l}_{adm} = \mathrm{span}(\mathcal{B}^{k,l}_{adm})$, the subspace of $\mathbb{W}^{k,l}$ spanned by admissible basis elements. 

\begin{theorem}\label{thrm:complement}
$
\mathbb{W}^{k,l}= \mathbb{W}^{k,l}_{adm} \oplus \left(E\wedge \mathbb{W}^{k-1,l-1}\right).
$
\end{theorem}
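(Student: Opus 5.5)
We need to prove $\mathbb{W}^{k,l}= \mathbb{W}^{k,l}_{adm} \oplus (E\wedge \mathbb{W}^{k-1,l-1})$. The plan is to combine Lemma~\ref{lemma: goodie-goodie} with injectivity of $E\wedge-$ restricted to $\mathbb{W}^{k-1,l-1}_{adm}$, and a dimension count.

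\emph{Spanning.} For $(k,l)\neq(n,1)$, Lemma~\ref{lemma: goodie-goodie} shows that $\mathcal{B}^{k,l}_{adm}\cup(E\wedge\mathcal{B}^{k-1,l-1}_{adm})$ is a basis of $\mathbb{W}^{k,l}$. Since $E\wedge\mathcal{B}^{k-1,l-1}_{adm}\subset E\wedge\mathbb{W}^{k-1,l-1}$, we get
$$\mathbb{W}^{k,l}=\mathbb{W}^{k,l}_{adm}+E\wedge\mathbb{W}^{k-1,l-1}.$$
The degenerate case $(k,l)=(n,1)$ is trivial: $\mathbb{W}^{n,1}=0$ because $k$-vectors in $n-1$ variables vanish for $k=n$. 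The cases $l=0$ or $k=0$ are also handled by the convention $\mathbb{W}^{k,-1}=0$.

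\emph{Directness.} We must show $\mathbb{W}^{k,l}_{adm}\cap E\wedge\mathbb{W}^{k-1,l-1}=0$. The key claim is
$$E\wedge\mathbb{W}^{k-1,l-1}=E\wedge\mathbb{W}^{k-1,l-1}_{adm}.$$
Granting this, any element of the intersection is a combination of $\mathcal{B}^{k,l}_{adm}$ that is also a combination of $E\wedge\mathcal{B}^{k-1,l-1}_{adm}$. Linear independence of the union basis from Lemma~\ref{lemma: goodie-goodie} then forces it to be zero.

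To prove the claim, apply Lemma~\ref{lemma: goodie-goodie} one level down, at $(k-1,l-1)$, when that pair is not $(n,1)$:
$$\mathbb{W}^{k-1,l-1}=\mathbb{W}^{k-1,l-1}_{adm}+E\wedge\mathbb{W}^{k-2,l-2}.$$
Wedging with $E$ kills the second summand since $E\wedge E=0$, which gives the claim. If $(k-1,l-1)=(n,1)$, then $\mathbb{W}^{k-1,l-1}=0$ and the claim is trivial.

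\emph{Consistency check.} Linear independence of $E\wedge\mathcal{B}^{k-1,l-1}_{adm}$ also shows $E\wedge-$ is injective on $\mathbb{W}^{k-1,l-1}_{adm}$. The resulting dimension formula $\dim\mathbb{W}^{k,l}=|\mathcal{B}^{k,l}_{adm}|+|\mathcal{B}^{k-1,l-1}_{adm}|$ agrees with the bijection in Lemma~\ref{lemma: good_bad}(c).

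\emph{Main obstacle.} The only delicate point is the edge cases where Lemma~\ref{lemma: goodie-goodie} is unavailable, namely $(k,l)=(n,1)$ and $(k-1,l-1)=(n,1)$. In both, the relevant spaces vanish because $n$-vectors in the $n-1$ variables $u$ are zero. I would check these separately before running the main argument.

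With $l=k$, the result gives the complement needed in Theorem~\ref{thm:main2}. Its basis is the admissible $u_I\partial_J$ for the $\mathbb{W}^k/E\wedge\mathbb{W}^{k-1}$ part, and the admissible $u_{I'}\partial_{J'}$ for the $\mathbb{W}^{k-1}/E\wedge\mathbb{W}^{k-2}$ part.
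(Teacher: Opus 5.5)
Your proposal is correct and follows the paper's proof: both take $\mathbb{W}^{k,l}=\mathbb{W}^{k,l}_{adm}\oplus E\wedge\mathbb{W}^{k-1,l-1}_{adm}$ from Lemma~\ref{lemma: goodie-goodie}, and then replace the second summand using $E\wedge\mathbb{W}^{k-1,l-1}=E\wedge\mathbb{W}^{k-1,l-1}_{adm}$, obtained by applying the same lemma at $(k-1,l-1)$ and using $E\wedge E=0$. Your separate treatment of the degenerate pairs plays the role of the paper's preliminary reduction to $0\leq k\leq n-1$, $l\geq 0$.
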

\begin{proof}
If $\mathbb{W}^{k,l}=0$ the equality is trivial. So we may assume that $0\leq k\leq n-1$ and $l\geq 0$.

Lemma \ref{lemma: goodie-goodie} gives a direct sum decomposition 
    $$
    \mathbb{W}^{k,l}=  \mathbb{W}^{k,l}_{adm}\oplus E\wedge \mathbb{W}^{k-1,l-1}_{adm}.
    $$ 
To end the proof, we will verify that 
\begin{equation}\label{eq:EWadm}
E\wedge \mathbb{W}^{k-1,l-1}= E\wedge \mathbb{W}^{k-1,l-1}_{adm}.
\end{equation}
Since $(k-1,l-1)\neq (n,1)$, again by Lemma \ref{lemma: goodie-goodie} we have that
$$
    \mathbb{W}^{k-1,l-1}=  \mathbb{W}^{k-1,l-1}_{adm}\oplus E\wedge \mathbb{W}^{k-2,l-2}_{adm},
$$ 
which immediately implies \eqref{eq:EWadm} and concludes the proof.
\end{proof}

We now prove Theorem~\ref{thm:main2}.

\begin{proof}(Theorem~\ref{thm:main2})
Setting $k=l$ in Theorem \ref{thrm:complement}, we see that for $k=0,1,\ldots,n-1$, we have
    $$
        \mathbb{W}^k = \mathbb{W}^k_{\mathrm{adm}}\oplus (E\wedge\mathbb{W}^{k-1}).
    $$
Thus the image of $\{u_I\partial_J : (I,J) \ \mbox{is admissible and } |I|=|J|=k\}$ gives a basis for $\mathbb{W}^k/(E\wedge\mathbb{W}^{k-1})$. By Theorem \ref{thm:poissoncohU},  
$$
 \left\{\ [\partial_t \wedge (u_{I'}\partial_{J'})],\ [u_I \partial_J]\;: \; \begin{array}{l}
(I',J')\, \mathrm{ and }\,\, (I,J)\, \mathrm{admissible}, \\
|I'|=|J'|=k-1,\; |I|=|J|=k
\end{array}
\right\}.
$$
is a basis for $H^k_{book}(U)$. Its dimension is given in \eqref{eq:dimHk}.
\end{proof}


\appendix

 \section{Taylor Series and the Euler vector field}\label{app:taylor}

Consider $\R^n=\R \times \R^{n-1}$ with coordinates $(t,u_1,\ldots,u_{n-1})$,  $\mathcal{I}\subseteq \R$ an open interval and $U_0\subseteq \R^{n-1}$ an open subset that is star-shaped with respect to $0$. Let $U=\mathcal{I}\times U_0$.

Throughout this section, for a smooth function $g\colon U\to\R$, we denote by $\widehat{g}\in C^\infty(U)$ the function
    $$
    \widehat{g}(t,u):= \int_0^1 g(t,su) \ ds,
    $$
    which satisfies $\widehat{g}(t,0)=g(t,0)$.

We use the notation 
    $$
    \frac{\widehat{\partial}g}{\partial u_i}(t,u) := \widehat{\frac{\partial g}{\partial u_i}}(t,u), \ \ i=1,2,...,n-1,
    $$
and let $\frac{\widehat{\partial}^{k}}{\partial u_I} = \frac{\widehat{\partial}}{\partial u_{i_1}}\ldots  \frac{\widehat{\partial}}{\partial u_{i_k}}$, for $I=(i_1,\ldots,i_k)$. By induction, we have that
    $$
    \frac{\widehat{\partial}^{k}g}{\partial u_I}(t,u) = \int_{[0,1]^k}s_1^{k-1}s_2^{k-2}\cdots s_{k-1}^1\ \frac{\partial^{k}g}{\partial u_I}(t,s_1s_2\cdots s_k u) \ ds_1 ds_2 \cdots ds_k.
    $$
    In particular, 
    $$
    \frac{\widehat{\partial}^{k}g}{\partial u_I}(t,0) = \frac{1}{k!}\frac{\partial^{k}g}{\partial u_I}(t,0).
    $$
With the above notation, we can write
$$ 
g(t,u) = g(t,0) +\int_0^1\frac{d}{ds}(g(t,su))ds   = g(t,0) + \sum_j u_j \frac {\widehat{\partial} g}{\partial u_j}(t,u).
$$

For each $l=0,1,2,\ldots$, consider the operator
$T^l\colon C^\infty(U)\to C^\infty(U)$,
\begin{equation}\label{eq: def_l_th_taylor}
    T^l(g)(t,u) = \frac{1}{l!}\sum_{|I|=l}\frac{\partial^l g}{\partial u_I}(t,0)  u_I,
\end{equation}
which is a homogeneous polynomial of degree $l$ in $u_1,\ldots,u_{n-1}$. We have the following formula for the Taylor series with remainder up to order $k\geq 0$:
    \begin{equation}\label{eq: taylor_exp}
        g = \sum_{l=0}^{k} T^l(g) + \sum_{|I|=k+1}u_I\frac{\widehat{\partial}^k g}{\partial u_I}.
    \end{equation}

\begin{lemma}\label{lem:properties}
The following properties hold:
\begin{enumerate}
    \item $\partial_tT^lg=T^l\partial_tg$.
    \item Setting $T^lg=0$ for $l<0$, we have
    $$
    T^l(u_Ig) = u_IT^{l-k}(g),
    $$
    where $k=|I|$.
    \item $\frac{\partial}{\partial u_i}T^l(g) = T^{l-1}(\frac{\partial g}{\partial u_i})$.
\end{enumerate}
\end{lemma}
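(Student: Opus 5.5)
All three identities reduce to direct computations with the explicit formula \eqref{eq: def_l_th_taylor},
$$
T^l(g)(t,u)=\frac{1}{l!}\sum_{|I|=l}\frac{\partial^l g}{\partial u_I}(t,0)\,u_I .
$$
Here the sum runs over ordered tuples $(i_1,\ldots,i_l)$, which is how \eqref{eq: def_k_th_taylor} is written. Equivalently, $T^l(g)$ is the degree-$l$ homogeneous part of the Taylor polynomial of $u\mapsto g(t,u)$ at $u=0$, with $t$ as a parameter. I will use this characterization throughout: $T^l(g)$ is the unique homogeneous polynomial of degree $l$ in $u$ (with coefficients smooth in $t$) such that $g-\sum_{j\le l}T^j(g)$ vanishes to order $l+1$ at $u=0$. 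This is justified by the remainder formula \eqref{eq: taylor_exp}, whose remainder $\sum_{|I|=l+1}u_I\,\widehat{\partial}^l g/\partial u_I$ visibly vanishes to order $l+1$.

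For (a), the coefficients $\frac{1}{l!}\frac{\partial^l g}{\partial u_I}(t,0)$ are smooth in $t$, and the monomials $u_I$ do not depend on $t$. Since $\partial_t$ commutes with the $u$-derivatives (Schwarz) and with evaluation at $u=0$, we get $\partial_t\frac{\partial^l g}{\partial u_I}(t,0)=\frac{\partial^l(\partial_t g)}{\partial u_I}(t,0)$, which is exactly $T^l\partial_t g$.

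For (b), I would not expand with Leibniz. Instead, multiply the expansion \eqref{eq: taylor_exp} of $g$ to order $l-k$ by $u_I$:
$$
u_Ig=\sum_{j=0}^{l-k}u_IT^j(g)+u_I R,
$$
where $R$ vanishes to order $l-k+1$ at $u=0$. Each $u_IT^j(g)$ is homogeneous of degree $j+k$, and $u_IR$ vanishes to order $l+1$. Uniqueness of Taylor coefficients then says the degree-$l$ homogeneous part of $u_Ig$ is $u_IT^{l-k}(g)$ when $l\ge k$. When $l<k$, every term of $u_Ig$ has degree at least $k>l$, so the degree-$l$ part is $0$, which matches the convention $T^{l-k}=0$. (Alternatively, apply the general Leibniz rule to $\partial^l(u_Ig)/\partial u_I$ at $u=0$: only the terms in which all $k$ derivatives hit $u_I$ exactly survive, and counting the ordered index tuples with the $1/l!$ normalization reproduces $u_I\frac{1}{(l-k)!}\sum\cdots$.)

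For (c), the same characterization applies. Differentiating the order-$l$ expansion in $u_i$ gives
$$
\frac{\partial g}{\partial u_i}=\sum_{j\le l}\frac{\partial}{\partial u_i}T^j(g)+\frac{\partial R}{\partial u_i}.
$$
Here $\partial_{u_i}T^j(g)$ is homogeneous of degree $j-1$, and $\partial_{u_i}R$ vanishes to order $l$. So the degree-$(l-1)$ part of $\partial g/\partial u_i$ is $\partial_{u_i}T^l(g)$, i.e.\ it equals $T^{l-1}(\partial g/\partial u_i)$. For $l=0$ both sides are zero, since $T^0 g=g(t,0)$ does not depend on $u$.

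The only delicate point is the uniqueness statement: a homogeneous polynomial of degree $l$ that vanishes to order $l+1$ at $0$ must be zero, and the remainder in \eqref{eq: taylor_exp} really does vanish to order $k+1$. Both follow because the remainder is a sum of terms $u_I$ with $|I|=k+1$ times smooth functions. Everything else is bookkeeping with the factorial normalization over ordered index tuples.
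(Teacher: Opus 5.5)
The paper gives no proof of this lemma. It is stated in the appendix as a routine property of Taylor polynomials and then used, so there is no argument of the authors' to compare against. Your proof is correct and fills that gap.

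The most direct verification would differentiate the explicit formula \eqref{eq: def_l_th_taylor} and apply the Leibniz rule, which you sketch as an alternative for (b). Your route through uniqueness of the Taylor polynomial avoids all the factorial bookkeeping over ordered tuples.

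On uniqueness: what you actually invoke in (b) and (c) is that a polynomial of degree at most $l$ (not only a homogeneous one) vanishing to order $l+1$ at $u=0$ is zero. You then compare homogeneous components. This is just as immediate as the homogeneous version you state.

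You are right to read the sum in \eqref{eq: def_l_th_taylor} over ordered tuples, as in \eqref{eq: def_k_th_taylor}. With non-decreasing $I$ and the factor $1/l!$ it would not give the Taylor component; for instance, it would send $u_1u_2$ to $\tfrac12 u_1u_2$.

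One cosmetic point: the remainder in \eqref{eq: taylor_exp} should carry $\widehat{\partial}^{k+1}$ rather than $\widehat{\partial}^{k}$ (compare \eqref{eq: def_lambda_operator}), and you copied this typo. It is harmless, since you only use that the remainder has the form $\sum_{|I|=l+1}u_I h_I$ with $h_I$ smooth.
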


	Consider the operator $E\colon C^\infty(U)\to C^\infty(U)$,
 $$
 E(g) = \sum_{j=1}^{n-1}  u_j\frac{\partial g}{\partial u_j}.
 $$
Denote the identity operator on $C^\infty(U)$ by $\mathrm{Id}$. 
 

 \begin{lemma}\label{lemma: hat_removing}
 Suppose that we have a function $g_I\in C^\infty(U)$ for each multi-index $I$,   $|I|=l+1$. Then
 $$
 \big( E-l \mathrm{Id} \big) \left( \sum_{|I|=l+1} u_I \widehat{g}_I \right) = \sum_{|I|=l+1} u_I g_I.
 $$
 In particular, for $l=0$ we have
 $$E\left(\sum_j u_j\widehat{g}_j \right)=\sum_j u_jg_j.$$
 \end{lemma}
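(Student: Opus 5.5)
The identity is linear in the family $(g_I)$, so it is enough to treat a single term $u_I\widehat{g}_I$ with $|I|=l+1$ and then sum over $I$. My plan is to split $(E-l\,\mathrm{Id})(u_I\widehat{g})$ with the Leibniz rule for the derivation $E$. Since $u_I$ is a homogeneous monomial of degree $l+1$, Euler's identity gives $E(u_I)=(l+1)u_I$. Hence
$$
(E-l\,\mathrm{Id})(u_I\widehat{g}) = u_I\big(E(\widehat{g}) + \widehat{g}\big),
$$
and the claim reduces to the single-function identity
$$
E(\widehat{g}) + \widehat{g} = g \qquad \text{for every } g\in C^\infty(U).
$$

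To prove this, I will differentiate under the integral sign in $\widehat{g}(t,u)=\int_0^1 g(t,su)\,ds$. This is allowed because $U_0$ is star-shaped, so $(t,su)\in U$ for all $s\in[0,1]$, and $g$ is smooth. The chain rule gives
$$
E(\widehat{g})(t,u)=\sum_j u_j\int_0^1 s\,\frac{\partial g}{\partial u_j}(t,su)\,ds = \int_0^1 s\,\frac{d}{ds}\big(g(t,su)\big)\,ds .
$$
Integrating by parts on $[0,1]$ turns this into $g(t,u)-\int_0^1 g(t,su)\,ds = g-\widehat{g}$. Adding $\widehat{g}$ gives $g$, which is the required identity.

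The case $l=0$ is the special case $|I|=1$, $I=j$, where $u_I=u_j$ and $l\,\mathrm{Id}=0$, so no separate argument is needed.

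There is no serious obstacle. The only points needing care are that differentiating under the integral is legitimate, which follows from compactness of $[0,1]$, smoothness of $g$ and the star-shapedness of $U_0$, and that the boundary term at $s=0$ vanishes in the integration by parts because of the factor $s$.
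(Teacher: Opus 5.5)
Your argument is correct and is essentially the paper's: both reduce to the single-function identity $\widehat{g}+E(\widehat{g})=g$, proved by the same fundamental-theorem-of-calculus computation (your integration by parts is the paper's $\int_0^1 \frac{d}{ds}\big(s\,g(t,su)\big)\,ds$ read in reverse). The only difference is that the paper reaches this reduction by induction on $l$, peeling off one factor $u_i$ at a time, whereas you get it in one step from $E(u_I)=(l+1)u_I$, which is a harmless streamlining.
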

 \begin{proof}
    If $l<-1$ both sides of the above formula vanish. For $l=-1$ we need to prove that for every $g\in C^\infty(U)$ we have $g=\widehat{g}+E(\widehat{g})$. Indeed,
    \begin{eqnarray*}
        \Big(\widehat{g}+E(\widehat{g})\Big)(t,u) &=& \int_0^1g(t,su)\ ds + \sum_{i=1}^{n-1}u_i\frac{\partial}{\partial u_i}\int_0^1g(t,su)\ ds \\
        &=& \int_0^1\left( g(t,su)+s\sum_{i=1}^{n-1}u_i\frac{\partial g}{\partial u_i}(t,su) \right)\ ds \\
        &=& \int_0^1\frac{d}{ds}\Big( s g(t,su) \Big)\ ds \\
        &=& g(t,u)
    \end{eqnarray*}
    For $l=0$, the result follows from 
    $$E(u_i\widehat{g_i}) = E(u_i)\widehat{g_i}+u_iE(\widehat{g_i}) = u_i(\widehat{g_i}+E(\widehat{g_i})) = u_i g_i.$$
    Fix $i\in I$. If we assume the result for $l-1\geq 0$, we get
    \begin{eqnarray*}
        E(u_I\widehat{g_I})-lu_I\widehat{g_I} &=& E(u_{i})u_{I\setminus i}\widehat{g_I}+u_{i}E(u_{I\setminus i}\widehat{g_I})-lu_I\widehat{g_I} \\
        &=& u_I\widehat{g_I} + u_{i}((l-1)u_{I\setminus i}\widehat{g_I}+u_{I\setminus i}g_I)-lu_I\widehat{g_I} \\
        &=& u_Ig_I,
    \end{eqnarray*}
    for any $i\in I$. 
    
 \end{proof}

 For each $k=0,1,2,\ldots$, we consider the operator $\Theta^k\colon C^\infty(U)\to C^\infty(U)$, 
 \begin{eqnarray}\label{eq: def_lambda_operator}
     \Theta^kg &=& \sum_{l=0}^{k-1}\frac{1}{l-k}T^lg + \int_0^1  \sum_{|I|=k+1}u_I\frac{\widehat{\partial}^{k+1} g}{\partial u_I}(t,su) \ \! ds \\
     &=& \sum_{l=0}^{k-1}\frac{1}{l-k}T^lg + \int_0^1\frac{1}{s^{k+1}}\left( g-\sum_{l=0}^kT^lg \right)\!\! (t,su)\ \! ds , \nonumber
 \end{eqnarray}
 where the second identity follows from \eqref{eq: taylor_exp}.

\begin{proposition}\label{prop: Theta_properties}
    The following properties hold:
    \begin{enumerate}
        \item $\partial_t\Theta^kg = \Theta^k\partial_tg$;
        \item $\Theta^k(u_ig) = u_i\Theta^{k-1}g$;
        \item $\frac{\partial}{\partial u_i}\Theta^kg = \Theta^{k-1}\frac{\partial g}{\partial u_i}$;
        \item $E(\Theta^kg)=\Theta^kE(g)$; 
        \item $\Theta^k(E-k\mathrm{Id})g=(E-k\mathrm{Id})\Theta^kg = g-T^kg$.
        \item $E(g)=lg$ if and only if $g$ is a homogeneous polynomial of degree $l$ in the variables $u_1,u_2,\ldots, u_{n-1}$.
    \end{enumerate}
\end{proposition}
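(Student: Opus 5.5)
We verify the six items one at a time, working throughout with the second expression in \eqref{eq: def_lambda_operator}. Write $R_k(g):=g-\sum_{l=0}^kT^lg$. By the Taylor formula \eqref{eq: taylor_exp} applied with order $k$, we have $R_k(g)=\sum_{|I|=k+1}u_I\frac{\widehat{\partial}^{k}g}{\partial u_I}$. Hence $s^{-(k+1)}R_k(g)(t,su)$ is a smooth function of $(s,t,u)\in[0,1]\times U$; here we use that $U_0$ is star-shaped. This legitimizes differentiating under the integral sign and evaluating limits as $s\to0$. The polynomial part $\sum_{l<k}\frac{1}{l-k}T^lg$ and the integral part will be treated separately in each item.

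For (a), both parts commute with $\partial_t$: differentiate under the integral and use Lemma~\ref{lem:properties}(a). For (b), Lemma~\ref{lem:properties}(b) gives $T^l(u_ig)=u_iT^{l-1}g$, and therefore $R_k(u_ig)=u_iR_{k-1}(g)$. In the integral we then get $s^{-(k+1)}(su_i)R_{k-1}(g)(t,su)=u_i\,s^{-k}R_{k-1}(g)(t,su)$, which is exactly the integrand of $\Theta^{k-1}$. The polynomial part reindexes via $l'=l-1$, since $\frac{1}{l-k}=\frac{1}{l'-(k-1)}$. Item (c) is analogous. Lemma~\ref{lem:properties}(c) gives $\partial_{u_i}R_k(g)=R_{k-1}(\partial_{u_i}g)$, and the chain rule $\partial_{u_i}[h(t,su)]=s(\partial_{u_i}h)(t,su)$ again lowers the power of $s$ by one. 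For (d) I would argue directly rather than via (b),(c), so as to avoid $\Theta^{-1}$ when $k=0$. First, $T^lg$ is homogeneous of degree $l$, so $E(T^lg)=lT^lg$; also $T^l(Eg)=lT^lg$ by Lemma~\ref{lem:properties}(b),(c). Hence $E$ commutes with $T^l$ and with $R_k$. Second, $E$ commutes with the dilation $h\mapsto h(t,s\cdot)$.

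Item (e) is the heart of the proposition. Since $E$ commutes with $\Theta^k$ by (d), it suffices to compute $(E-k\mathrm{Id})\Theta^kg$. On the polynomial part, $(E-k)\frac{1}{l-k}T^lg=T^lg$, so this part contributes $\sum_{l<k}T^lg$. On the integral part, set $h=R_k(g)$ and use $E[h(t,su)]=s\frac{d}{ds}h(t,su)$. Then
$$(E-k)\int_0^1 s^{-k-1}h(t,su)\,ds=\int_0^1\frac{d}{ds}\big(s^{-k}h(t,su)\big)\,ds=h(t,u)-\lim_{s\to0}s^{-k}h(t,su).$$
The limit vanishes because $h(t,su)=O(s^{k+1})$. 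Adding the two parts gives $\sum_{l<k}T^lg+g-\sum_{l\le k}T^lg=g-T^kg$.

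For (f), one direction is Euler's identity for homogeneous polynomials. For the converse, suppose $Eg=lg$. Then $\frac{d}{ds}\big(s^{-l}g(t,su)\big)=s^{-l-1}\big(Eg-lg\big)(t,su)=0$, so $g(t,su)=s^lg(t,u)$ for $s\in(0,1]$; again this uses star-shapedness. On the other hand, \eqref{eq: taylor_exp} gives $g(t,su)=\sum_{m\le l}s^mT^mg(t,u)+O(s^{l+1})$. Comparing the two expressions as $s\to0$ forces $T^mg=0$ for $m<l$ and $g=T^lg$, so $g$ is a homogeneous polynomial of degree $l$.

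The main technical point throughout is justifying the manipulations near $s=0$: the smoothness of the integrand, differentiation under the integral sign, and the vanishing of the boundary term. All of these rest on the remainder representation $R_k(g)=\sum_{|I|=k+1}u_I\frac{\widehat{\partial}^{k}g}{\partial u_I}$.
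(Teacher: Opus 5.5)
Your proof is correct. For (a)--(c) it coincides with the paper's argument: Lemma~\ref{lem:properties} applied to the second expression in \eqref{eq: def_lambda_operator}, with the power of $s$ shifting by one.

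For (d)--(f) you take a different route. The paper handles these items as follows:
\begin{enumerate}
\item It gets (d) in two lines from (b) and (c).
\item It proves (e) by applying Lemma~\ref{lemma: hat_removing} to the first expression of $\Theta^k$, with $\theta_I=\widehat{\partial}^{k+1}g/\partial u_I$, and then using the Taylor formula.
\item It obtains the converse in (f) as a corollary of (e): if $(E-l\,\mathrm{Id})g=0$, then $g-T^lg=\Theta^l(E-l\,\mathrm{Id})g=0$.
\end{enumerate}
You instead show directly that $E$ commutes with each $T^l$ and with dilations. You prove (e) from the single identity $E[h(t,su)]=s\frac{d}{ds}h(t,su)$ plus the fundamental theorem of calculus on the second expression. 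You prove (f) by a scaling argument that does not use $\Theta$ at all.

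The paper's route is more economical, and it makes clear that the homotopy identity (e) is what drives (f). Yours is more self-contained. Your computation in (e) is precisely the mechanism behind Lemma~\ref{lemma: hat_removing}, but carried out once, without the induction on $l$, and using only that the remainder is $O(s^{k+1})$. Your direct proof of (d) also avoids the paper's tacit use of $\Theta^{-1}$ when $k=0$; that operator has to be read as $g\mapsto\widehat g$, i.e.\ the second expression with $k=-1$.

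One small remark: your formula for $R_k(g)$ copies a typo from \eqref{eq: taylor_exp}, where the hat-derivative should have order $k+1$. You only use that $R_k(g)$ is a combination of monomials $u_I$ with $|I|=k+1$ and smooth coefficients, so nothing is affected.
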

\begin{proof}
    Item (a) is a direct consequence of \eqref{eq: def_lambda_operator} and the analogous property for $T^l$ (Lemma~\ref{lem:properties} (a)).
    To prove (b), given $i=1,\ldots,n-1$, we have (using Lemma~\ref{lem:properties} (b))
    \begin{eqnarray*}
        \Theta^k(u_ig) &=& \sum_{l=0}^{k-1}\frac{1}{l-k}T^l(u_ig) + \int_0^1\frac{1}{s^{k+1}}\left( u_ig-\sum_{l=0}^kT^l(u_ig) \right)\! (t,su)\ \! ds\\
        &=& \sum_{l=0}^{k-1}\frac{1}{l-k}u_iT^{l-1}g + \int_0^1\frac{1}{s^{k+1}}\left( u_ig-\sum_{l=0}^ku_iT^{l-1}g \right)\! (t,su)\ \! ds \\
        &=& u_i\Theta^{k-1}g.
    \end{eqnarray*}
    For item (c) we differentiate both sides of \eqref{eq: def_lambda_operator} and use Lemma~\ref{lem:properties} (c) to obtain
    \begin{eqnarray*}
        \frac{\partial}{\partial u_i}\Theta^kg &=& \sum_{l=0}^{k-1}\frac{1}{l-k}\frac{\partial}{\partial u_i}T^lg + \int_0^1\frac{1}{s^{k+1}}\frac{\partial}{\partial u_i}\left(\left( g-\sum_{l=0}^kT^lg \right)\! (t,su)\right) \ \! ds \\
        &=& \sum_{l=0}^{k-1}\frac{1}{l-k}T^{l-1}\frac{\partial g}{\partial u_i} + \int_0^1\frac{1}{s^{k+1}} s\left( \frac{\partial g}{\partial u_i}-\sum_{l=0}^kT^{l-1}\frac{\partial g}{\partial u_i} \right)\!\! (t,su)\ \! ds \\
        &=& \Theta^{k-1}\frac{\partial g}{\partial u_i}.
    \end{eqnarray*}
    Now (d) follows from (b) and (c): 
    $$E(\Theta^kg) = \sum_{i=1}^{n-1}u_i\frac{\partial}{\partial u_i}\Theta^kg =  \sum_{i=1}^{n-1}u_i\Theta^{k-1} \frac{\partial g}{\partial u_i} = \sum_{i=1}^{n-1}\Theta^k\left(u_i\frac{\partial g}{\partial u_i}\right) = \Theta^kE(g).$$
    For (e), 
    writing $\theta_I=\frac{\widehat{\partial}^{k+1} g}{\partial u_I}$, and recalling that $E(T^l g)= l T^lg$, we have
    \begin{eqnarray*}
        (E-k\mathrm{Id})\Theta^kg &=& (E-k\mathrm{Id}) \left(\sum_{l=0}^{k-1}\frac{1}{l-k}T^lg + \sum_{|I|=k+1}u_I\widehat{\theta}_I \right) \\
        &=& \sum_{l=0}^{k-1}T^lg + \sum_{|I|=k+1}u_I\theta_I\\
        &=& g-T^kg,
    \end{eqnarray*}
where we used Lemma \ref{lemma: hat_removing} in the second equality and \eqref{eq: taylor_exp} for the conclusion.

    For (f), we know that homogeneous polynomials of degree $l$ must satisfy $E(g)=lg$. On the other hand if $(E-l\mathrm{Id})g=0$, by (e) we also have 
    $$
    g-T^lg = (E-l\mathrm{Id})\Theta^lg = \Theta^l(E-l\mathrm{Id})g = 0,
    $$
    thus $g=T^lg$ is a homogeneous polynomials of degree $l$.
    
\end{proof}

\section{Proof of Lemma~\ref{lem:timeDependent_flatness}}\label{app:eulercoh}

We will first prove the result for two special types of open subsets of $\R^n$, and then conclude the result for arbitrary open subsets $U$ by considering an appropriate covering.

\medskip

\noindent{\bf Step 1}.
Let us first assume that 
\begin{equation}\label{eq:empty}
U\cap (\R\times \{0\})=\emptyset.
\end{equation}

For any $c\in\mathcal{W}^k(U)$, and for each $i=1,2,\ldots,n-1$, there are unique $a_i\in\mathcal{W}^{k-1}(U)$ and $b_i\in\mathcal{W}^k(U)$ such that $i_{du_i}a_i=i_{du_i}b_i=0$ and
    $c=\partial_{u_i}\wedge a_i+b_i$. Denoting $E_i=E-u_i\partial_i$ we have $i_{du_i}E_i=0$ and
    \begin{equation}\label{eq: u_ic}
        u_ic=E\wedge a_i + (u_ib_i - E_i\wedge a_i).
    \end{equation}
    If $E\wedge c =0$, we must have $E\wedge(u_ib_i-E_i\wedge a_i)=0$; by contracting with $du_i$, we obtain $u_i(u_ib_i-E_i\wedge a_i)=0$. This implies that 
    $u_ib_i-E_i\wedge a_i=0$ (since the set where $u_i\neq 0$ is dense). 
    It follows from \eqref{eq: u_ic} that
    $u_ic = E\wedge a_i$. 
    Multiplying this last equation by $u_i$ and summing up for $i=1,2,\ldots, n-1$ gives $|u|^2c = E\wedge \sum_{i=1}^{n-1}u_ia_i$. We conclude that $c=E\wedge \xi$ for
    $$
    \xi = \frac{1}{|u|^2}\left(\sum_{i=1}^{n-1}u_ia_i\right)\in\mathcal{W}^{k-1}(U).
    $$
   This is well defined by \eqref{eq:empty}, which proves that $H^k(U)=0$ for all $k$.
We conclude that Lemma~\ref{lem:timeDependent_flatness} holds for $U$ satisfying \eqref{eq:empty}.

\medskip

\noindent{\bf Step 2}.
Let us now assume that $U$ is a cylinder of the form
\begin{equation}\label{eq:Zn}
Z^n=\mathcal{I}\times B^{n-1}(0,R),
\end{equation}
for some open interval $\mathcal{I}$ and $R>0$.
Denote by $C^\infty(\mathcal{I})_{n-1}$ the complex that has $C^\infty(\mathcal{I})$ in degree $n-1$ and zero elsewhere, with zero differential. 
The proof that Lemma~\ref{lem:timeDependent_flatness} holds in this case consists in showing that the map
\begin{equation}\label{eq:restric}
\mathcal{W}^\bullet(Z^n)\to C^\infty(\mathcal{I})_{n-1}, \quad f \partial_{u_1}\wedge\ldots\wedge \partial_{u_{n-1}}\mapsto f|_{(t,0)},
\end{equation}
is a quasi-isomorphism. This will be done by induction on $n$.

For $n\geq 2$, consider the projection $\vartheta\colon\R^n\to\R^{n-1}$, $(t,u_1,\ldots,u_{n-1})\mapsto (t,u_1,\ldots,u_{n-2})$; it takes $Z^n$ onto the cylinder 
    $$
    Z^{n-1}=\mathcal{I}\times B^{n-2}(0, R),
    $$
    which can be seen as a submanifold of $Z^n$ via the embedding $\iota\colon Z^{n-1}\to Z^n$, $(t,u_1,\ldots,u_{n-2})\mapsto (t,u_1,\ldots,u_{n-2},0)$.
    So we have a restriction map $C^\infty(Z^n)\to C^\infty(Z^{n-1})$, $f\mapsto f\circ\iota$ as well as an extension map $C^\infty(Z^{n-1})\to C^\infty(Z^n)$, $f\mapsto f\circ\vartheta$. These maps extend naturally to restrictions and extension maps between multi-vector fields in $\mathcal{W}^{k}(Z^{n})$ and $\mathcal{W}^{k}(Z^{n-1})$. 

    Let $E=\sum_{i=1}^{n-1}u_i\partial_{u_i}$, regarded as a vector field on $Z^n$, and $E'= \sum_{i=1}^{n-2}u_i\partial_{u_i}$, regarded as a  vector field on $Z^{n-1}$. Consider the maps of complexes
    $$
    \mathcal{A}^k\colon(\mathcal{W}^k(Z^n), E\wedge-)\to (\mathcal{W}^{k-1}(Z^{n-1}), E'\wedge -),\;\; \mathcal{A}^k(c) = a|_{Z^{n-1}},
    $$ 
    where $c= \partial_{u_{n-1}}\wedge a +b$ is the unique decomposition of $c$ with $i_{du_{n-1}}a=0$ and $i_{du_{n-1}}b=0$,
    and 
    $$
\mathcal{B}^{k-1}\colon (\mathcal{W}^{k-1}(Z^{n-1}), E'\wedge -)\to(\mathcal{W}^k(Z^n), E\wedge-),\quad  \mathcal{B}^{k-1}(a) = \partial_{u_{n-1}}\wedge a.
    $$
    We claim that $\mathcal{A}$ and $\mathcal{B}$ are quasi-isomorphisms. Since $\mathcal{A}\circ\mathcal{B}=\mathrm{Id}_{\mathcal{W}^\bullet(Z^{n-1})}$, it suffices to prove that $\mathrm{Id}_{\mathcal{W}^\bullet(Z^n)}$ and $\mathcal{B}\circ\mathcal{A}$ are homotopic.
    
    Consider the operator $\Upsilon\colon C^\infty(Z^n)\to C^\infty(Z^n)$ 
    defined by
    $$
    \Upsilon(f)(t,u) = \int_0^1\frac{\partial f}{\partial u_{n-1}}(t,u_1,\ldots,u_{n-2},su_{n-1})\ ds.
    $$
    The map $\Upsilon$ has the following properties:
\begin{itemize}
\item $\Upsilon(u_{n-1}f)=f$.
\item $\Upsilon(u_jf)=u_j\Upsilon(f)$, for $j\neq n-1$.
\item $f - u_{n-1}\Upsilon(f)=f\circ\iota\circ\vartheta$.
\end{itemize}

    

We consider the extension of $\Upsilon$ to an operator on $\mathcal{W}^k(Z^n)$, still denoted by $\Upsilon$, via $f_I\partial_I\mapsto \Upsilon(f_I)\partial_I$. Let $\mathcal{H}^k\colon\mathcal{W}^k(Z^n)\to\mathcal{W}^{k-1}(Z^n)$ be defined by
    $$\mathcal{H}^k(c)=\Upsilon(a),$$
    where $c=\partial_{u_{n-1}}\wedge a +b$, with $i_{du_{n-1}}a=0$ and $i_{du_{n-1}}b=0$. 
By the properties of $\Upsilon$, we have
    \begin{eqnarray*}
        \Upsilon(E\wedge a) &=& E'\wedge \Upsilon(a) + \partial_{u_{n-1}}\wedge a\\
        &=& E\wedge \Upsilon(a) +\partial_{u_{n-1}}\wedge (a-u_{n-1}\Upsilon(a))\\
        &=& E\wedge \Upsilon(a) + \partial_{u_{n-1}}\wedge a|_{Z^{n-1}}\\
        &=&  E\wedge \Upsilon(a) + \mathcal{B}^{k-1}\circ\mathcal{A}^k(c)
    \end{eqnarray*}
    Taking $i=n-1$ in \eqref{eq: u_ic}, we have $u_{n-1}c = E\wedge a + (u_{n-1}b-E'\wedge a)$; applying $\Upsilon$, we get
    \begin{equation}\label{eq: c_decomposition}
        c = E\wedge\Upsilon(a) + \mathcal{B}^{k-1}\circ\mathcal{A}^k(c) + b - E'\wedge \Upsilon(a).
    \end{equation}
    Since $E\wedge c = (E'+u_{n-1}\partial_{u_{n-1}})\wedge (\partial_{u_{n-1}}\wedge a +b)= \partial_{u_{n-1}}\wedge (u_{n-1}b - E'\wedge a) + E'\wedge b$, we have
    \begin{eqnarray*}
        \mathcal{H}^{k+1}(E\wedge c) &=& \Upsilon(u_{n-1}b - E'\wedge a) \\
        &=&  b - E'\wedge\Upsilon(a).
    \end{eqnarray*}
    Comparing with \eqref{eq: c_decomposition} gives    
    $$c - \mathcal{B}^{k-1}\circ\mathcal{A}^k(c) = E\wedge\mathcal{H}^k(c) + \mathcal{H}^{k+1}(E\wedge c).$$
This shows that the operator $\mathcal{H}: \mathcal{W}^\bullet(Z^n)\to \mathcal{W}^{\bullet -1}(Z^n)$ is a homotopy between $\mathcal{B}\circ \mathcal{A}$ and $\mathrm{Id}$, so $\mathcal{A}$ and $\mathcal{B}$ are quasi-inverses.

We inductively obtain a sequence of quasi-isomorphisms
$$
\mathcal{W}^\bullet(Z^n){\to} \mathcal{W}^{\bullet-1}(Z^{n-1}) {\to} \ldots {\to} \mathcal{W}^{\bullet-n+1}(Z^1)= C^\infty(\mathcal{I})_{n-1},
$$
whose composition is \eqref{eq:restric}. It follows that Lemma~\ref{lem:timeDependent_flatness} holds for $U=Z^n$.

    
\smallskip
    
 \noindent{\bf Step 3}.   Given an open subset $U\subseteq \R^n$, we will show, as in step 2, that the map
 \begin{equation}\label{eq:quasiiso}
\mathcal{W}^\bullet(U)\to C^\infty(U\cap (\R\times \{0\}))_{n-1}, \qquad f \partial_{u_1}\wedge\ldots\wedge \partial_{u_{n-1}}\mapsto f|_{(t,0)}
 \end{equation}
 is a quasi-isomorphism; this proves Lemma~\ref{lem:timeDependent_flatness}.

 Since any function in $C^\infty(U\cap (\R\times \{0\}))$ is the restriction of a function on $C^\infty(U)$, the map \eqref{eq:quasiiso} is surjective in cohomology (any element in $\mathcal{W}^{n-1}(U)$ is closed). It remains to check that this map is quasi-injective.

Let $\mu \in \mathcal{W}^k(U)$ be a cocycle that is mapped to $0$ under \eqref{eq:quasiiso}. We must show that $\mu$ is trivial in cohomology, i.e., there is a $\xi\in \mathcal{W}^{k-1}(U)$ such that $\mu = E\wedge \xi$.

Let $V=U\setminus (U\cap (\R\times \{0\}))$. Then $V$ is open in $U$, and we can cover $U\cap (\R\times \{0\})$
with cylinders of the form \eqref{eq:Zn}, all contained in $U$. It follows from steps 1 ans 2 that we can find an open cover 
 $U=\cup_\alpha U_\alpha$ with the property that the restriction of the map \eqref{eq:quasiiso} to each $U_\alpha$ is quasi-injective. Note that $\mu|_{U_\alpha}$
 is a cocycle in $\mathcal{W}^k(U_\alpha)$ that is mapped to $0$ under the restriction of \eqref{eq:quasiiso}. It follows that, for each $\alpha$, there is a $\xi_\alpha$ such that $\mu|_{U_\alpha}=E\wedge \xi_\alpha$. Taking  a partition of unity $(\rho_\alpha)_\alpha$ subordinated to this cover, we define $\xi = \sum_\alpha \rho_\alpha \xi_\alpha \in\mathcal{W}^{k-1}(U)$ satisfying $\mu=E\wedge
     \xi$.



\bibliographystyle{plain}

\begin{thebibliography}{99}


\bibitem{BL}
Bursztyn, H., Lima, H.: Linearization and Poisson cohomology of certain Poisson structures on spheres. In final preparation.

\bibitem{CFMbook} Crainic, M., Fernandes, R., Marcut, I.,
{\em Lectures on Poisson geometry}.
Grad. Stud. Math., 217
American Mathematical Society, Providence, RI, 2021. xix+479 pp.



\bibitem{DZ}
Dufour, J.-P., Zung, N.-T., {\em  Poisson structures and their
normal forms}, Progress in. Mathematics, 242, Birkhauser Boston,
2005.




\bibitem{GinzWein}
Ginzburg, V. L., Weinstein, A.,
Lie-Poisson structure on some Poisson Lie groups. {\em J. Amer. Math. Soc.} {\bf 5} (1992), 445--453.




\bibitem{Zeiser}
Hoekstra, D., Zeiser, F.,
Poisson cohomology of 3D Lie algebras. {\em J. Geom. Phys.} {\bf 191} (2023), Paper No. 104862, 38 pp.




\bibitem{Lich} Lichnerowicz, A.,
Les variétés de Poisson et leurs algèbres de Lie associées. {\em J. Differential Geometry} {\bf 12} (1977), 253--300.


\bibitem{LuWe} Lu, J.-H., Weinstein, A.,
Poisson Lie groups, dressing transformations, and Bruhat decompositions. {\em J. Differential Geom.} {\bf 31} (1990), 501--526.

\bibitem{MZ}
Marcuţ, I.,  Zeiser, F.,
The Poisson cohomology of $\mathfrak{sl}_2(\mathbb{R})^*$.
{\em J. Symplectic Geom.} {\bf 21} (2023), 603-–652.










\end{thebibliography}

\end{document}